\documentclass{amsart}

\usepackage[vcentermath,enableskew,stdtext]{youngtab}
\usepackage{youngtab}
\usepackage{amsfonts}
\usepackage{amsmath}
\usepackage{amscd}

\usepackage[all]{xy}




\newtheorem{dummy}{dummy}[section]
\newtheorem{lemma}[dummy]{Lemma}
\newtheorem{theorem}[dummy]{Theorem}
\newtheorem{corollary}[dummy]{Corollary}
\newtheorem{proposition}[dummy]{Proposition}
\theoremstyle{definition}
\newtheorem{definition}[dummy]{Definition}
\newtheorem{example}[dummy]{Example}
\newtheorem{remark}[dummy]{Remark}

\newcommand{\Aff}[0]{\mathbb A}
\newcommand{\Affn}[0]{{\mathbb A}^{n^2} - \{0\}}
\newcommand{\DMeff}{DM^{eff}_-(F)}
\newcommand{\DMeffXX}{DM^{eff}_-(\XX)}
\newcommand{\End}{\mathcal End}
\newcommand{\Filt}[1]{\nu_\XX^{\ge #1}}
\newcommand{\GL}{\mathbf {GL}}
\newcommand{\Gm}{\mathbb G_m}
\newcommand{\Gr}{\mathbf {Gr}}
\newcommand{\Hom}{\mathcal Hom}
\newcommand{\JJ}{\mathcal J}
\newcommand{\MSBD}[1]{M(SB(A))\{#1\}}
\newcommand{\OO}{\mathcal O}
\newcommand{\Piece}[1]{\nu_\XX^{#1}}
\newcommand{\Proj}[0]{\mathbb P}

\newcommand{\quat}{\binom{a,b}{F}}
\newcommand{\QQ}[0]{\mathcal Q}
\newcommand{\Tate}[0]{\mathbb Z}
\newcommand{\Tatepure}[1]{\mathbb Z\{#1\}}
\newcommand{\Tatemixed}[2]{\mathbb Z(#1)[#2]}
\newcommand{\XX}[0]{\mathcal X}
\newcommand{\ZZ}[0]{\mathbb Z}

\title{On motives of algebraic groups associated to a divsion algebra}
\author{Evgeny Shinder}

\begin{document}

\maketitle


\section{Introduction}

In this paper we consider motives and motivic cohomology of algebraic groups $GL_1(A)$ and $SL_1(A)$ for a central simple
algebra $A$ of prime degree $n$ over a field $F$. Motivation to study these groups comes from the problems arising
in algebraic $K$-theory, in particular non-triviality of $SK_1(A)$ \cite{SusSK1}, \cite{MerSK1}.

It is proved by Biglari \cite{Big} that motives of {\it split} reductive algebraic groups such as $GL_n(F)$ and $SL_n(F)$ are Tate
motives. Furthermore, using higher Chern classes in motivic cohomology constructed by Pushin \cite{Pu} 
one can write down explicit direct sum decompositions for the motives
of these two groups with integral coefficients. 
Non-split algebraic groups such as $GL_1(A)$ and $SL_1(A)$ are more intricate.
We note however that all the complications lie in $n$-torsion effects ($n$ is the degree of $A$): the situation becomes 
trivial if we make $n$ invertible.   

For $GL_1(A)$ we follow an idea of Suslin to split the motive $M(GL_1(A))$ into two pieces: the first piece is a very simple Tate motive,
whereas the second piece is a twisted Tate motive $M$ over $\XX$, where $\XX$ is the Voevodsky-Chech simplicial scheme
associated to the Severi-Brauer variety $SB(A)$ (Theorem \ref{MotiveM}). We investigate the structure of the latter motive $M$ using the twisted
slice filtration, and compute the second differential in the arising spectral sequence (Theorem \ref{Differential}).
Using the spectral sequence we compute some lower weight motivic cohomology groups of $GL_1(A)$ (Corollary \ref{SmallWeights}).
We also consider the case of degree 2 algebra where one can write explicit decomposition for $M(GL_1(A))$
(Proposition \ref{MGLquat}). 

For $SL_1(A)$ we only consider algebras $A$ of degree either 2 or 3. 
In both of these cases $SL_1(A)$ admits an explicit smooth compactification $X_A$
given as a hyperplane section of a generalized Severi-Brauer variety (Proposition \ref{Compactif}).
In the degree 2 case $X_A$ is a 3-dimensional quadric, whose motive can be computed explicitly
(Proposition \ref{MSLquat}).
In the degree 3 case $X_A$ is a hyperplane section of the twisted form of the Grassmannian $Gr(3,6)$.
The motives of such hyperplane sections with coefficients in $\ZZ/3$ were already considered in \cite{Sem}.
We give a slightly different proof of the decomposition we need with integral (or more precisely, with 
$\ZZ[\frac{1}{2}]$) coefficients
(Proposition \ref{MotDecomp}), using a version of Rost nilpotence theorem that
we prove along the way (Proposition \ref{RostNilp}).

The author expresses his gratitute to A.Suslin for numerous conversations on matters discussed
in the paper.      

We fix the notation we need.

$\bullet$ $F$ is a perfect field. We assume $char(F) \ne 2$ whenever we speak of quaternion algebars and 
$char(F)=0$ in Section 5.3.

$\bullet$ $A$ is a central simple algebra over $F$ of degree $n$. We assume $n$ to be prime in Section 4.

$\bullet$ $SB_k(A), \, k \ge 1$, is the generalized Severi-Brauer variety introduced in Section 2.
In particular $SB_1(A) = SB(A)$ is the usual Severi-Brauer variety.

$\bullet$ $\DMeff$ is the Voevodsky triangulated category of motives \cite{V0}, with the Tate twist $(*)$ and
shift $[*]$. We also use $\{*\} = (*)[2*]$, especially when working with motives of smooth projective
varieties. For example, 
$M(\Proj^1) = \Tate \oplus \Tatemixed{1}{2} = \Tate \oplus \Tatepure{1}$.

$\bullet$ If $\JJ$ is a vector bundle we write $\JJ^*$ for the dual bundle. 

\section{Varieties associated to central simple algebras}

In this section $A$ is a central simple algebra $A$ over a field $F$ of degree $n$,
i.e. an associative unital algebra of dimenstion $n^2$ over $F$ that
has no nontrivial two-sided ideals and such that the center of $A$ coinsides with $F$.

According to the Wedderburn theoreom, $A$ is isomorphic to the matrix algebra $M_n(D)$
over a central division algebra $D$ over $F$. $A$ is called split if it is isomorphic to $M_n(F)$.
It is well known that any central simple algbra splits in some finite separable extension of scalars $E/F$:
$$
A_E = A \otimes_F E \cong M_n(E).
$$

By the standard Galois descent arguments the set of isomorphism classes of central simple algebras over $F$ is
in bijection with $H^1(Gal(F^{sep}/F), PGL_n(F^{sep})$.
Galois descent also implies that $det: M_n(F^{sep}) \to F^{sep}$ and $tr: M_n(F^{sep}) \to F^{sep}$
descend to define the so called reduced norm map $Nrd: A \to F$ and
the reduced trace map $Trd: A \to F$.

\begin{example}
A quaternion algebra $\quat$ is defined for $a,b \in F^*$ to be
a vector space of dimension $4$ with the basis $1, i, j, k$ and multiplication
$i^2 = a, j^2 = b, ij = -ji = k$
(under the assumption $char(F) \ne 2$). It follows from the Wedderburn theorem, that
$\quat$ either splits or is a division algebra.
$Trd$ and $Nrd$ are
the usual trace and norm: $Trd(x + yi + zj + wk) = 2x$, $Nrd(x + yi + zj + wk) = x^2 - ay^2 - bz^2 + abw^2$.
\end{example}

\subsection{Generalized Severi-Brauer varieties}

\label{GenSB}

The generalized Severi-Brauer variety $SB_k(A)$ is a closed subvariety in $Gr(kn, A)$
representing the functor which associates to a commutive algebra
$R$ over $F$ the set
$$
SB_k(A)(R) = \{right \hskip0.1cm ideals \hskip0.1cm of \hskip0.1cm A \otimes R \hskip0.1cm
which \hskip0.1cm are \hskip0.1cm projective \hskip0.1cm of \hskip0.1cm rank \hskip0.1cm kn \hskip0.1cm over \hskip0.1cm R\}
$$

\begin{remark}
\label{Istar}
1. Let $V$ be a vector space of dimension $n$ over $F$, and let $A$ be a split central simple algebra $A = End(V)$.
In this case right ideals in $A$ of rank $kn$ have the form $V^* \otimes U \subset V^* \otimes V \cong End(V)$, where
$U$ is a subspace in $V$ of dimension $k$. Therefore we have a canonical identification
$$
SB_k(End(V)) = Gr(k, V).
$$

2. If $A$ has a right ideal $I$ of rank $n$, $A$ is split.
Indeed,
the right multiplication action $R_\alpha: I \to I, a \in A$ satisfies $R_{\alpha \beta} = R_\beta R_\alpha$,
and the homomorphism
$$
R: A \to End(I)^{op} = End(I^*)
$$
is an isomorphism by the Schur lemma.
\end{remark}

In general $SB_k(A)$ is a form of $Gr(k,V) = Gr(k, n)$ twisted by the cocycle defining $A$
and the usual Severi-Brauer variety $SB(A) = SB_1(A)$ is a twisted form of the projective
space $\Proj(V) \cong \Proj^{n-1}$.

\begin{example}
In the case $A = \quat$, $SB(A)$ is a conic in $\Proj^2$ defined by the equation
$x^2 = ay^2 + bz^2$.
\end{example}

By definition, $SB_k(A)$ is endowed with a locally free sheaf $\JJ_k$ 
of right $A$-modules of rank $k$.
$\JJ_k$ is a subsheaf of $\OO_{SB_k(A)} \otimes A$.

\begin{remark}
\label{Jsplit}
In the split case $A = End(V)$, $\JJ_k$ is identified
with $V^* \otimes \xi = \Hom(V, \xi)$ over $Gr(k,V)$,
where $\xi$ denotes the tautological rank $k$ bundle on $Gr(k,V)$.
\end{remark}

We will write $\JJ$ for $\JJ_1$ over $SB(A)$. $\JJ^*$ denotes the dual 
sheaf.

\begin{lemma}
\label{Jstar}
The sheaf of algebras $\OO_{SB(A)} \otimes A$ is isomorphic to $\End(\JJ^*)$.
\end{lemma}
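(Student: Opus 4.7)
The plan is to produce a canonical morphism of sheaves of $\OO_{SB(A)}$-algebras from $\OO_{SB(A)} \otimes A$ to $\End(\JJ^*)$ and then to check it is an isomorphism by reducing fiberwise to Remark \ref{Istar}(2).

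First I would construct the map. Since $\JJ \subset \OO_{SB(A)} \otimes A$ is stable under right multiplication by $A$, every local section $a$ of $A$ defines an $\OO$-linear endomorphism $R_a(x) = xa$ of $\JJ$. The identity $R_{ab} = R_b \circ R_a$ means that $a \mapsto R_a$ extends $\OO$-linearly to a morphism of sheaves of $\OO$-algebras
$$\OO_{SB(A)} \otimes A \longrightarrow \End(\JJ)^{op}.$$
Composing with the canonical dualization isomorphism $\End(\JJ)^{op} \cong \End(\JJ^*)$ yields the desired map $R$.

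Second I would check that $R$ is an isomorphism. Both source and target are locally free $\OO_{SB(A)}$-modules of rank $n^2$ (using that $\JJ$ is locally free of rank $n$), so it suffices to verify that $R$ induces an isomorphism on fibers at every closed point $x \in SB(A)$. Setting $E = k(x)$, the moduli description of $SB(A)$ identifies the fiber of $\JJ$ at $x$ with a right ideal $I \subset A_E$ of rank $n$ over $E$, and the fiber of $R$ at $x$ is precisely the right multiplication map $A_E \to \End_E(I)^{op}$. By Remark \ref{Istar}(2), the existence of such an ideal $I$ forces $A_E$ to be split, and the right multiplication map is then an isomorphism by the Schur lemma argument recorded there. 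The only real obstacle is keeping track of the opposite-algebra convention coming from dualization; conceptually the lemma is just the sheaf-theoretic globalization of Remark \ref{Istar}(2) over the variety $SB(A)$.
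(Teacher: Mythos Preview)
Your proposal is correct and is precisely the argument the paper has in mind: the one-line proof there (``the isomorphism is given by the right action of $A$ on $\JJ$, as in \ref{Istar}, 2'') is exactly your construction of $R$ via right multiplication together with the fiberwise appeal to Remark~\ref{Istar}(2). You have simply spelled out the details that the paper leaves implicit.
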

\begin{proof}
The isomorphism is given by the right action of $A$ on $\JJ$, as in \ref{Istar}, 2.
\end{proof}

If $p: E \to T$ is a vector bundle we will write
$\Gr_{T}(k, E)$ for the
Grassmannian bundle over $T$ of $k$-planes in $E$.
$Gr_T(k,E)$ comes equipped with a short exact sequence of vector bundles:
$$
0 \to \xi_E \to p^*E \to \QQ_E \ \to 0.
$$
If $E$ is a trivial bundle over $T = Spec(F)$, 
we use the notation $\xi = \xi_E$ and $\QQ = \QQ_E$.

\begin{proposition}
\label{SplitSB}
There is a canonical isomorphism of
varieties over $SB(A)$
$$
SB(A) \times SB_k(M_l(A)) \cong \Gr_{SB(A)}(k, \JJ^{*\oplus l}),
$$
where $\JJ$ is the tautological sheaf of ideals on $SB(A)$.

Furthermore, the tautological bundles $\xi_{\JJ^{*\oplus l}}$ and $\QQ_{\JJ^{*\oplus l}}$ over 
$\Gr_{SB(A)}(k, \JJ^{*\oplus l})$
correspond under this isomorphism to vector bundles
over $SB(A) \times SB_k(M_l(A))$, which in the split case become identified with
$p_1^*(\OO(1)) \otimes p_2^*(\xi)$ and $p_1^*(\OO(1)) \otimes p_2^*(\QQ)$ respectively.
\end{proposition}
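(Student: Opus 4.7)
The plan is to reduce to the split case via the identification of $A$ with an endomorphism algebra over $SB(A)$, and then to invoke a relative analogue of Remark \ref{Istar}.1. First I would apply Lemma \ref{Jstar} together with the standard equality $M_l(F) = \End(F^l)$ to obtain an isomorphism of sheaves of $\OO_{SB(A)}$-algebras
$$
\OO_{SB(A)} \otimes M_l(A) \;\cong\; \End(\JJ^*) \otimes \End(F^l) \;\cong\; \End(\JJ^{*\oplus l}).
$$
Since an algebra isomorphism sends right ideals to right ideals and preserves their rank as $\OO$-modules, this identifies the base change $SB(A) \times SB_k(M_l(A))$ with the scheme of right ideals in $\End(\JJ^{*\oplus l})$ that are locally free of rank $k \cdot nl$.

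Next I would establish the relative form of Remark \ref{Istar}.1: for any rank $m$ vector bundle $E$ on a base $T$, the functor assigning to $T' \to T$ the set of right ideals in $\End(E_{T'})$ which are locally free of rank $km$ is represented by $\Gr_T(k, E)$, with universal ideal $E^* \otimes \xi_E$ sitting inside $E^* \otimes E = \End(E)$. This is a Zariski-local statement in $T$, so it reduces to the case where $E$ is trivial, which is exactly Remark \ref{Istar}.1 applied fiberwise. Combined with the previous step, this yields the desired isomorphism $SB(A) \times SB_k(M_l(A)) \cong \Gr_{SB(A)}(k, \JJ^{*\oplus l})$ together with a description of the tautological ideal as $(\JJ^{*\oplus l})^* \otimes \xi_{\JJ^{*\oplus l}}$.

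For the last assertion I would unwind everything in the split case $A = \End(V)$. By Remark \ref{Jsplit} one has $\JJ = V^* \otimes \OO(-1)$ on $SB(A) = \Proj(V)$, so $\JJ^{*\oplus l} \cong V^{\oplus l} \otimes \OO(1)$ is the twist of the trivial rank $nl$ bundle by $\OO(1)$. For any line bundle $L$ on $T$ the canonical isomorphism $\Gr_T(k, E \otimes L) \cong \Gr_T(k, E)$ sends $\xi_{E \otimes L}$ to $\xi_E \otimes \pi^*L$ and $\QQ_{E\otimes L}$ to $\QQ_E \otimes \pi^*L$, since tensoring by $L$ is a bijection between the respective sets of rank $k$ subbundles. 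Specialising to $E = \OO^{nl}$, $L = \OO(1)$ and $T = SB(A)$ gives $SB(A) \times SB_k(M_l(A)) \cong SB(A) \times Gr(k, nl)$, together with the identifications $\xi_{\JJ^{*\oplus l}} \cong p_1^*\OO(1) \otimes p_2^*\xi$ and $\QQ_{\JJ^{*\oplus l}} \cong p_1^*\OO(1) \otimes p_2^*\QQ$ asserted in the proposition.

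The main obstacle I anticipate is formulating the relative version of Remark \ref{Istar}.1 cleanly, so that the map $\xi \mapsto E^* \otimes \xi$ is genuinely functorial in $T$ and really identifies the two functors of right ideals; once this is in place the rest is either bookkeeping (the endomorphism-algebra rewrite) or a standard property of Grassmannians (invariance under line-bundle twist, with the corresponding twist of the tautological sub- and quotient bundles). A secondary point that deserves care is tracking $\End$ versus $\End^{op}$, so that the right-module structure on $E^* \otimes \xi$ matches the one transported from $M_l(A)$ through Lemma \ref{Jstar}.
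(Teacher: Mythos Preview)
Your proposal is correct and follows essentially the same route as the paper: the first assertion is deduced from Lemma \ref{Jstar} (you make explicit the relative version of Remark \ref{Istar}.1 that the paper leaves implicit), and the second assertion is reduced via Remark \ref{Jsplit} to the fact that under the canonical identification $\Gr_T(k,E)\cong\Gr_T(k,E\otimes L)$ the tautological bundles pick up a twist by $p^*L$. The only minor difference is that the paper proves this last line-bundle-twist fact as a separate lemma via a Zariski $GL_n$-cocycle argument, whereas you simply observe that tensoring by $L$ gives a bijection on rank $k$ subbundles; both are fine.
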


\begin{proof}
The first assertion follows from Lemma \ref{Jstar}.
To prove the second assertion, consider the split case $A = End(V)$, so we can
identify $J^*$ with $V \otimes \OO(1)$ by \ref{Jsplit}.
Then the isomorphism in
question becomes the canonical identification:
$$
\Gr_{\Proj(V)}(k, V^{\oplus l} \otimes \OO) \cong \Gr_{\Proj(V)}(k, V^{\oplus l} \otimes \OO(1)),
$$
and the claim follows from the lemma below.

\end{proof}

\medskip

\begin{lemma}
Let $E$ be a vector bundle and $L$ be a line
bundle over the same base $T$. Then
the tautological bundles
$\xi_{E \otimes L}$ and $\QQ_{E \otimes L}$ over
$\mathbf{Gr}_T(k, E \otimes L)$ correspond under the canonical
isomorphism  
$$
\mathbf{Gr}_T(k, E) \cong \Gr_T(k, E \otimes L)
$$
of varieties over $T$ to
$p^* L \otimes \xi_E$ and $p^* L \otimes \QQ_E$ respectively
($p$ is the projection to $T$).
\end{lemma}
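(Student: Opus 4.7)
The plan is to produce the canonical isomorphism from the functor of points, and then read off the identification of the tautological bundles directly from the construction.

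First I would recall that $\Gr_T(k, E)$ represents the functor that sends a $T$-scheme $f : S \to T$ to the set of locally free quotients $f^*E \twoheadrightarrow Q$ of rank $\mathrm{rk}(E) - k$ (equivalently, to rank-$k$ subbundles of $f^*E$). The map $F \mapsto F \otimes f^*L$ is a natural bijection between rank-$k$ subbundles of $f^*E$ and rank-$k$ subbundles of $f^*(E \otimes L) = f^*E \otimes f^*L$, because $f^*L$ is a line bundle and so tensoring with it is an equivalence that preserves rank and local-direct-summand status. This natural bijection of functors yields the canonical isomorphism $\varphi : \Gr_T(k,E) \xrightarrow{\sim} \Gr_T(k, E \otimes L)$ asserted in the statement.

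Next I would identify the tautological bundles. By the very construction of $\varphi$, pulling back along $\varphi$ the universal rank-$k$ subbundle $\xi_{E \otimes L} \hookrightarrow p^*(E \otimes L)$ on $\Gr_T(k, E \otimes L)$ must give the rank-$k$ subbundle on $\Gr_T(k,E)$ obtained from the universal $\xi_E \hookrightarrow p^*E$ by tensoring with $p^*L$. In other words, $\varphi^* \xi_{E \otimes L} = \xi_E \otimes p^*L$, and the analogous statement for the quotient follows by tensoring the tautological short exact sequence
\[
0 \to \xi_E \to p^*E \to \QQ_E \to 0
\]
with the line bundle $p^*L$; this gives
\[
0 \to \xi_E \otimes p^*L \to p^*(E \otimes L) \to \QQ_E \otimes p^*L \to 0,
\]
and comparing with the pullback via $\varphi$ of the tautological sequence on $\Gr_T(k, E \otimes L)$ yields the claimed identification of $\QQ_{E \otimes L}$ with $\QQ_E \otimes p^*L$.

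There is essentially no hard step; the only thing to be a little careful about is checking that $\varphi$ really is an isomorphism of schemes (and not merely a bijection on points). This can be verified either by invoking Yoneda for the functors described above, or concretely by passing to an open cover of $T$ on which $L$ is trivial — on such an open $U \subset T$ the map $\varphi$ restricts to the identity $\Gr_U(k, E|_U) = \Gr_U(k, E|_U \otimes \OO_U)$, so $\varphi$ is locally an isomorphism, hence an isomorphism.
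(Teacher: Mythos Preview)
Your argument is correct. You use the functor-of-points description of $\Gr_T(k,E)$ and observe that tensoring with the line bundle $f^*L$ is a natural autoequivalence on rank-$k$ subbundles, so Yoneda produces the isomorphism $\varphi$ and, by universality, the identification $\varphi^*\xi_{E\otimes L}\cong \xi_E\otimes p^*L$; the statement for $\QQ$ then drops out of the tautological exact sequence. The closing remark about checking $\varphi$ is a scheme isomorphism is fine and in fact already subsumed by the Yoneda step.

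The paper proceeds differently. It regards the pair $(\xi, Gr(k,n))$ as carrying a $GL_n$-action and obtains $(\xi_E,\Gr_T(k,E))$ by twisting with the Zariski cocycle $\theta_E\in H^1_{Zar}(T,GL_n)$ defining $E$. Then it uses the factorization $\theta_{E\otimes L}=\theta_E\cdot\theta_L$ with $\theta_L\in H^1_{Zar}(T,\Gm)$, and notes that the central $\Gm$ acts trivially on $Gr(k,n)$ but by scaling on $\xi$; twisting by $\theta_L$ therefore leaves the base unchanged and replaces $\xi_E$ by $p^*L\otimes\xi_E$. Your approach is more self-contained and arguably more transparent for this particular lemma; the cocycle argument, on the other hand, fits the ambient theme of the section (everything in sight is a twisted form) and makes the parallel with the $K_1$ lemma that follows---where the same $\theta_{E\otimes L}=\theta_E\cdot\theta_L$ trick reappears---more visible.
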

\begin{proof}

Consider the pair $S = (\xi, Gr(k,n))$.
$S$ admits a $GL_n(F)$ action
defined in an obvious way:
$$
(g \in GL_n(F), v \in V \subset F^n) \to (gv \in gV \subset F^n).
$$

Therefore for any cocycle $\theta \in H^1_{Zar}(T, GL_n(F))$ we
can construct a Zarisky locally trivial family $S_\theta$.
In fact, if $\theta = \theta_E$ is the cocycle defining $E$,
then $$S_{\theta_E} \cong (\xi_E, \Gr_T(k, E)).$$

Now $\theta_{E\otimes L} = \theta_E \theta_L$, $\theta_L \in H^1_{Zar}(T, \Gm)$.
$\Gm$ acts on the trivial pair $S$ by:
$$
(\lambda \in \Gm(F), v \in V \subset F^n) \to (\lambda v \in V \subset F^n).
$$

Therefore the pair $S_{\theta_{E \otimes L}}$ is isomorphic to
$(p^*(L) \otimes \xi_E, \Gr_T(k, E))$.

The same argument applies to $\QQ_E$.

\end{proof}

\medskip

Consider now the case $SB_n(M_l(A))$, where $A$ is a central simple algebra of degree $n$.
$SB_n(M_l(A))$ is a twisted form of the
Grassmannian $Gr(n, V^{\oplus l}) = Gr(n,ln)$.

We will need the following definition.

\begin{definition}
$\alpha_1, \alpha_2, \cdots \alpha_l \in A$ are called independent if there does not exist
$0 \ne \lambda \in A$ with the property $\alpha_1 \lambda = \alpha_2 \lambda = \cdots
\alpha_l \lambda = 0.$
\end{definition}

It is easy to see that
$\alpha_1, \alpha_2, \cdots \alpha_l \in A$ are independent if and only if the {\it left} ideal
$A \alpha_1 + A \alpha_2 + \cdots + A\alpha_l \subset A$ equals $A$.

\begin{remark}
If $A$ is a division algebra, then any $\alpha_1, \alpha_2, \cdots \alpha_l \in A$ are
independent unless they are all zero. On the other hand, if $A$ is a split algebra $A = End(V)$,
then $\alpha_1, \alpha_2, \cdots \alpha_l \in A$ are independent if and only if
$$(\alpha_1, \alpha_2, \cdots, \alpha_l): V \to V^{\oplus l}$$
is injective.
\end{remark}

\begin{lemma} Let $A$ be a central simple algebra of rank $n$ over a field $F$.

1) $SB_n(M_l(A))$ represents the functor which associates
to a commutive $F$-algebra $R$ the set of nondegenerate columns
modulo the right action of $A_R$: 
$$SB_n(M_l(A))(R) = \{(\alpha_1, \alpha_2, \cdots, \alpha_l):
\alpha_1, \alpha_2, \cdots, \alpha_l \in A_R \, independent\} / A_R^*.$$

2) Vector bundle $\xi_n^{\oplus n}$ and line bundle $\Lambda^n \xi_n$ over $Gr(n,V^{\oplus l})$ canonically
descend to $SB_n(M_l(A))$.

\end{lemma}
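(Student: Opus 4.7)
The plan is to deduce part (1) from Morita equivalence between $M_l(A_R)$ and $A_R$, and part (2) from Proposition~\ref{SplitSB} by exhibiting explicit bundles on the product $SB(A) \times SB_n(M_l(A))$ that become pullbacks along $p_2$ after base change to a splitting field of $A$.

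For part (1): under the Morita equivalence of right $M_l(A_R)$-modules with right $A_R$-modules given by $M \mapsto M e_{11}$, the regular module $M_l(A_R)$ corresponds to $A_R^{\oplus l}$ (its first column), so right ideals of $M_l(A_R)$ projective of rank $n^2 l$ over $R$ correspond to right $A_R$-submodules of $A_R^{\oplus l}$ projective of rank $n^2$ over $R$ that are local direct summands. A column $(\alpha_1,\ldots,\alpha_l) \in A_R^{\oplus l}$ gives the right $A_R$-linear map $A_R \to A_R^{\oplus l}$, $x \mapsto (\alpha_1 x,\ldots,\alpha_l x)$. The independence hypothesis $\sum A_R\alpha_i = A_R$ is equivalent to the existence of $\beta_i \in A_R$ with $\sum \beta_i\alpha_i = 1$, which produces a left inverse $(y_i) \mapsto \sum \beta_i y_i$; so independence is exactly the condition that this map is a split injection. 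Its image is then a direct summand of $A_R^{\oplus l}$ isomorphic to $A_R$, and two columns produce the same image iff they differ by right multiplication by an element of $A_R^*$. Conversely, every right $A_R$-submodule of $A_R^{\oplus l}$ projective of rank $n^2$ and a local direct summand must be shown to be free of rank one as an $A_R$-module, at least locally on $R$; by Nakayama this reduces to the case where $R$ is a field, where it follows from Wedderburn--Artin (any $n^2$-dimensional submodule of $A_k^{\oplus l}$ is isomorphic to $A_k$). This freeness step is the one delicate point; everything else is formal.

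For part (2): by Proposition~\ref{SplitSB} the tautological $\xi_{\JJ^{*\oplus l}}$ on $SB(A) \times SB_n(M_l(A)) \cong \Gr_{SB(A)}(n,\JJ^{*\oplus l})$ equals $p_1^*\OO(1) \otimes p_2^*\xi_n$ in the split case. Define
$E := \xi_{\JJ^{*\oplus l}} \otimes p_1^*\JJ$ and $L := \Lambda^n \xi_{\JJ^{*\oplus l}} \otimes p_1^*\Lambda^n \JJ$. Since $\JJ \cong V^* \otimes \OO(-1)$ in the split case by Remark~\ref{Jsplit}, the $\OO(\pm 1)$ factors cancel and one computes $E \cong p_1^* V^* \otimes p_2^*\xi_n \cong p_2^*(\xi_n^{\oplus n})$ and $L \cong p_1^* \Lambda^n V^* \otimes p_2^* \Lambda^n \xi_n \cong p_2^*(\Lambda^n \xi_n)$, using that $V^*$ and $\Lambda^n V^*$ are trivial bundles over $\mathrm{Spec}(F)$. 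Hence after faithfully flat base change to a splitting field of $A$ both $E$ and $L$ are pullbacks along $p_2$; by faithfully flat descent along the faithfully flat projection $p_2$, they are $p_2$-pullbacks over $F$ as well, giving the required canonical descent of $\xi_n^{\oplus n}$ and $\Lambda^n \xi_n$ to $SB_n(M_l(A))$. The key observation is that tensoring with $p_1^*\JJ$ and $p_1^*\Lambda^n \JJ$ precisely cancels the $\OO(1)$ and $\OO(n)$ factors that appear in the split-case expressions for $\xi_{\JJ^{*\oplus l}}$ and $\Lambda^n \xi_{\JJ^{*\oplus l}}$.
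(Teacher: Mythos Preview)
Your argument for part~(1) matches the paper's: both pass from right ideals in $M_l(A)$ to right $A$-submodules of $A^{\oplus l}$ via the diagonal idempotents (Morita equivalence), then invoke Wedderburn--Artin over a field to see the resulting module is free of rank one. You are somewhat more explicit than the paper about reducing from a general base ring $R$ to a residue field via Nakayama.

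For part~(2) the two approaches diverge. The paper stays intrinsic to $SB_n(M_l(A))$: it applies the idempotent decomposition of~(1) fibrewise to the tautological sheaf $\JJ_n \subset \OO \otimes M_l(A)$, obtaining a locally free rank-one right $A$-module sheaf which in the split case is $V^* \otimes \xi_n \cong \xi_n^{\oplus n}$; the line bundle is then the one associated to this $GL_1(A)$-torsor via the character $Nrd$. Your route instead passes through the product with $SB(A)$ and Proposition~\ref{SplitSB}, twists $\xi_{\JJ^{*\oplus l}}$ by $p_1^*\JJ$ to cancel the $\OO(1)$ factor, and then wants to push down along $p_2$. The paper's argument is more economical (it directly reuses~(1) and never leaves $SB_n(M_l(A))$); yours gives a concrete recipe in terms of objects already analysed in Proposition~\ref{SplitSB}, but costs an extra step.

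One imprecision in your~(2): the phrase ``faithfully flat descent along the faithfully flat projection $p_2$'' does not describe what is actually needed. You have a bundle $E$ on the product, defined over $F$, which \emph{after base change to a splitting field} becomes a $p_2$-pullback; you want to conclude $E$ is already a $p_2$-pullback over $F$. Descent along $p_2$ would require descent data on $SB(A)\times SB(A)\times SB_n(M_l(A))$, which you have not produced. The correct argument is: form the adjunction map $p_2^*(p_2)_*E \to E$ over $F$; by flat base change for $(p_2)_*$ (using that $SB(A)$ is proper and geometrically connected) this becomes an isomorphism over the splitting field, hence was already an isomorphism over $F$, and $(p_2)_*E$ is your descended bundle. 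This is faithfully flat base change along $\mathrm{Spec}(E')\to\mathrm{Spec}(F)$, not descent along $p_2$.
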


\begin{proof}
1) A right ideal $R$ of dimension $ln^2$ in $M_l(A)$ splits via the action
of the idempotents $e_i = e_{i,i} \in M_l(A)$ as a direct sum
of $l$ isomorphic right $A$-submodules $R_0 \subset A^{\oplus l}$ of dimension $n^2$, corresponding
to the columns of $R$.

Any $A$-module is a direct sum of minimal $A$ modules, and comparing the dimensions we
see that in fact $R_0$ is isomorphic to the trivial right $A$-module $A$.
Let $\phi: A \to R_0$ be an isomorphism, and let $(\alpha_1, \alpha_2, \cdots, \alpha_l) = \phi(1)$.
By definition $\alpha_1, \alpha_2, \cdots, \alpha_l$ are independent
and any two vectors $v_1, v_2 \in A^{\oplus l}$
consisting of independent entries generate the same right submodule if
and only if $v_1 = v_2 \lambda$ for some $\lambda \in A^*$.

2) The same argument as above applied to the tautological sheaf of ideals $\JJ_n$ on $SB_n(M_l(A))$ shows that
this sheaf splits as a direct sum of $n$ copies of $A$-module sheaf, which becomes isomorphic
to $\xi_n^{\oplus n}$ if the algebra splits. $\Lambda^n \xi_n$
is the line bundle associated with the sheaf of $A$-modules $\xi_n^{\oplus n}$ via the character
$Nrd: GL_1(A) \to \Gm$
\end{proof}

\begin{remark}
The usual description of the functor of points for $Gr(n,ln)$ amounts to choosing $n$ linearly
independent columns of size $ln$, forming an $ln \times n$ matrix of rank $n$,
which is considered modulo the right action of $GL_n(F)$.
The lemma says that for $SB_n(M_l(A))$ the $n \times n$ blocks of this matrix are rational.
\end{remark}

\begin{remark}
\label{Plucker}
The second statement of the lemma implies that there is a twisted Pl\"{u}cker embedding
$$
SB_n(M_l(A)) \to \Proj^{\binom{nl}{l}}.
$$

To make this map expicit, we pick a basis of the global sections of $\Lambda^n \xi_n$
consisting of the forms
$H_{\beta_1, \cdots, \beta_l}(\alpha_1, \cdots, \alpha_l) = Nrd(\sum_{i} \beta_i \alpha_i)$.

For example, in the case of a quaternion algebra $A$ and $l=2$ we can pick the basis to be
$Nrd(\alpha_1), Nrd(\alpha_2), Nrd(\alpha_1+\alpha_2), Nrd(\alpha_1+i\alpha_2),
Nrd(\alpha_1+j\alpha_2),
Nrd(\alpha_1+k\alpha_2)$.

\end{remark}

\medskip

\medskip

\subsection{Algebraic groups associated to $A$}

For a central simple algebra $A$ over $F$, one can define linear algebraic groups $GL_1(A)$, $SL_1(A)$.
For any $R$ is a commutative algebra over $F$ the $R$-points of these groups are:

$$GL_1(A)(R) = (A \otimes_F R)^* = \{g \in A \otimes_F R: Nrd(g) \ne 0\}$$
$$SL_1(A)(R) = \{g \in (A \otimes_F R)^*: Nrd(g) = 1\}.$$

Note that $SL_1(A) = Ker(Nrd: GL_1(A) \to \Gm)$ in the category of algebraic groups.
$GL_1(A)$ and $SL_1(A)$ are forms of $GL_n(F)$ and $SL_n(F)$ respectively
twisted by the cocycle defining $A$.

\begin{example}
For the quaternion algebra $A = \quat$,
$GL_1(A)$ is an open subscheme in $\Aff^4$ defined by
$x^2 - ay^2 - bz^2 + abw^2 \ne 0$,
and $SL_1(A)$ is a quadric in $\Aff^4$ defined by
$x^2 - ay^2 - bz^2 + abw^2 = 1$.
\end{example}

Let $E \to T$ be a vector bundle of rank $n$ and consider the associated group scheme
$\GL_T(E)$ of local automorphisms of $E$ over $T$.
Let $\alpha_E$ be the tautological automorphism of $p^*(E)$ where
$p: \GL_T(E) \to T$ is the projection.
Via explicit description of $K_1$ by Gillet and Grayson \cite{GG}, $\alpha_E$
defines an element $[\alpha_E] \in K_1(\GL_T(E))$.

This applies in particular to the case of the trivial bundle $E = V$ over a point,
in which case we denote the corresponding element in $K_1(GL_n(F))$ by $[\alpha_0]$.

The following proposition is analogous to \ref{SplitSB}.

\begin{proposition}
\label{SplitGL1}
There is a canonical isomorphism of varieties over $SB(A)$
$$
SB(A) \times GL_1(A) \cong \GL_{SB(A)}(J^*),
$$
where $\JJ$ is the tautological sheaf of ideals on $SB(A)$.

Furthermore, the tautological class $[\alpha_{J^*}] \in K_1(\GL(J^*))$
corresponds under this isomorphism to a class in $K_1(SB(A) \times GL_1(A))$
which in the split case is identified with $[p_1^*(\OO(1))] \cdot [p_2^*(\alpha_0)]$
where the product is the usual product $K_0 \otimes K_1 \to K_1$.
\end{proposition}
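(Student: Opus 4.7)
The plan is to follow the structure of Proposition \ref{SplitSB}. The first assertion is immediate from Lemma \ref{Jstar}: passing from the sheaf-of-algebras isomorphism $\OO_{SB(A)} \otimes A \cong \End(\JJ^*)$ to the groups of units on each side (the locus $Nrd \ne 0$ on the one hand, the automorphism group scheme on the other) yields the canonical identification
$$SB(A) \times GL_1(A) \cong \GL_{SB(A)}(\JJ^*).$$

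For the second assertion, specialize to the split case $A = \End(V)$. Then $SB(A) = \Proj(V)$, $GL_1(A) = GL(V)$, and Remark \ref{Jsplit} (applied with $k=1$) gives $\JJ^* \cong V \otimes \OO(1)$, so the isomorphism becomes
$$\Proj(V) \times GL(V) \cong \GL_{\Proj(V)}(V \otimes \OO(1)),$$
arising from the canonical identification $\End(V \otimes L) \cong \End(V)$ for any line bundle $L$. Under this identification, a pair $(t,g) \in \Proj(V)(R) \times GL(V)(R)$ corresponds to the automorphism $g \otimes \text{id}_{t^*\OO(1)}$ of $V_R \otimes t^*\OO(1)$. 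It follows that the tautological automorphism $\alpha_{\JJ^*}$, transported to $\Proj(V) \times GL(V)$, equals $p_2^*(\alpha_0) \otimes \text{id}_{p_1^* \OO(1)}$ as an automorphism of $p_2^* V \otimes p_1^* \OO(1)$.

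The conclusion then follows from the standard $K_1$ compatibility $[\beta \otimes \text{id}_L] = [L]\cdot[\beta]$ for a line bundle $L$ and an automorphism $\beta$, which is exactly the definition of the $K_0 \otimes K_1 \to K_1$ pairing on line bundle classes and automorphism classes. Applied to $\beta = p_2^*\alpha_0$ and $L = p_1^*\OO(1)$, this yields
$$[\alpha_{\JJ^*}] = [p_1^*\OO(1)] \cdot [p_2^*\alpha_0]$$
in $K_1(\Proj(V) \times GL(V))$.

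The most delicate step is the explicit identification of the tautological automorphism of $\GL_{\Proj(V)}(V \otimes \OO(1))$ with $p_2^*\alpha_0 \otimes \text{id}_{p_1^*\OO(1)}$: this requires unwinding the Wedderburn-style isomorphism $\End(V \otimes L) \cong \End(V)$ functorially at the level of $R$-points and matching it with the Gillet-Grayson description of $[\alpha_E]$. Once this is in place, the final $K_1$ formula is automatic.
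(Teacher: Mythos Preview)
Your proposal is correct and follows essentially the same line as the paper: Lemma \ref{Jstar} for the first assertion, then in the split case identifying $\JJ^* \cong V \otimes \OO(1)$ via Remark \ref{Jsplit}, recognizing the tautological automorphism as $p_2^*\alpha_0 \otimes \mathrm{id}_{p_1^*\OO(1)}$, and invoking the $K_0 \otimes K_1 \to K_1$ product. The only cosmetic difference is that the paper isolates the last step as a separate lemma for general $E$ and $L$ and applies the Jouanolou trick to reduce to an affine base before appealing to the ring-theoretic $K_0(S)\otimes K_1(S)\to K_1(S)$ product, whereas you invoke the pairing directly on the scheme.
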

\begin{proof}

The first assertion follows from Lemma \ref{Jstar}.
To prove the second assertion, consider the split case $A = End(V)$, and we
identify $J^*$ with $V \otimes \OO(1)$ by \ref{Jsplit}.
Then the isomorphism in
question becomes the canonical identification:
$$
\Proj(V) \times GL_1(End(V)) = \GL_{\Proj(V)}(V \otimes \OO) =  \GL_{\Proj(V)}(V \otimes \OO(1)),
$$
and the claim follows from the following lemma.
\end{proof}

\begin{lemma}
Let $E$ be a vector bundle and $L$ be a line bundle over the same base $T$, which is assumed to be quasiprojective.
Then the tautological class $[\alpha_{E \otimes L}] \in K_1(\GL_T(E \otimes L))$ corresponds
to $[p^* L] \cdot [\alpha_E] \in K_1(\GL_T(E))$ under the canonical isomorphism
of group schemes over $T$ ($p$ is the projection to $T$)
$$
\GL_T(E) \cong \GL_T(E \otimes L).
$$
\end{lemma}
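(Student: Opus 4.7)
The plan is to unwind what the canonical isomorphism $\GL_T(E) \cong \GL_T(E \otimes L)$ does to the tautological data, and then match the result against the defining property of the $K_0 \otimes K_1 \to K_1$ pairing.

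First I would describe the isomorphism explicitly at the level of $T$-functors: it sends a local automorphism $\phi$ of $E$ to $\phi \otimes \mathrm{id}_L$, which is an automorphism of $E \otimes L$. This is a natural isomorphism of group schemes because tensoring with the line bundle $L$ gives an equivalence of the categories of locally free sheaves on opens of $T$ and hence an isomorphism $\underline{\mathrm{Aut}}(E) \xrightarrow{\sim} \underline{\mathrm{Aut}}(E \otimes L)$ of sheaves of groups. Under this identification, the universal automorphism $\alpha_{E\otimes L}$ of $p^*(E\otimes L) = p^*E \otimes p^*L$ is simply $\alpha_E \otimes \mathrm{id}_{p^*L}$.

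Second I would invoke the external product $K_0(T) \otimes K_1(X) \to K_1(X)$ for a map $p\colon X \to T$: for a locally free sheaf $M$ on $T$ and an automorphism $\phi$ of a locally free sheaf $N$ on $X$, the class $[p^*M] \cdot [\phi] \in K_1(X)$ is by definition represented by $\mathrm{id}_{p^*M} \otimes \phi$, an automorphism of $p^*M \otimes N$. Applying this with $M = L$, $X = \GL_T(E)$, $N = p^*E$, and $\phi = \alpha_E$ shows that $[p^*L]\cdot[\alpha_E]$ is the class of $\mathrm{id}_{p^*L} \otimes \alpha_E$ on $p^*L \otimes p^*E$. The canonical symmetry isomorphism $\sigma\colon p^*L \otimes p^*E \xrightarrow{\sim} p^*E \otimes p^*L$ conjugates this to $\alpha_E \otimes \mathrm{id}_{p^*L}$, and since conjugation by any isomorphism leaves the class in $K_1$ unchanged, the result of the first step and the second step agree, proving the lemma.

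The main obstacle is being careful that the Gillet--Grayson model for $K_1$ really does carry a $K_0$-module structure with the tensor-product description used in step two; at this level of detail, this is essentially built into the construction, but to avoid invoking it as a black box I would reduce to the affine setting where both $E$ and $L$ are trivial (using the quasiprojectivity of $T$ to cover by such opens), verify the identity there directly, where $[\alpha_E]$ becomes the Quillen class of the generic matrix and the pairing with $[L]=[\mathcal{O}]$ is literally the identity, and then Zariski descent for algebraic $K$-theory (or equivalently the naturality of the pairing with respect to restriction) glues these local computations into the global statement.
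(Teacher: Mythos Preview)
Your argument is essentially the paper's: identify $\alpha_{E\otimes L}$ with $\alpha_E\otimes\mathrm{id}_{p^*L}$ under the canonical isomorphism, then recognise this as the very definition of $[p^*L]\cdot[\alpha_E]$ in the $K_0\otimes K_1\to K_1$ product. The only substantive difference is in how you justify that product formula. The paper uses the Jouanolou trick to replace $T$ by a single affine scheme (an affine-space torsor over $T$), so that $\GL_T(E)$ becomes $\mathrm{Spec}(S)$ for a ring $S$ and the product $K_0(S)\otimes K_1(S)\to K_1(S)$ is the classical one for rings, where $[P]\cdot[(Q,\phi)]=[(P\otimes Q,\mathrm{id}_P\otimes\phi)]$ by definition.

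Your alternative, covering $T$ by opens on which $E$ and $L$ are trivial and then invoking ``Zariski descent'', does not quite work as stated: $K_1$ is not a Zariski sheaf, so equality of two classes on each member of an open cover does not in itself force equality globally. Moreover on such opens $[L|_U]=[\mathcal O_U]$, so the local identity degenerates to a tautology and carries no information about the twist. The Jouanolou reduction avoids this by producing a \emph{single} affine replacement with the same $K$-theory, rather than a cover. If you want to keep your first two steps and simply quote the existence of the $K_0$-module structure on $K_1$ in the Gillet--Grayson model, that is also fine; but the clean way to unpack it is the paper's.
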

\begin{proof}
Using the Jouanolu trick, we assume that $T = Spec(R)$.
We will use the same letters $E$ and $L$ for the $R$-projective modules
corresponding to bundles $E$ and $L$.

$GL(E)$ is an affine scheme.
Let $S = \Gamma(GL(E), \OO_{GL(E)})$.
$\alpha_E$ is the tautological automorphism of the module $p^* E$,
and $\alpha_{E \otimes L} = \alpha_E \otimes id_{p^* L}$ is the automorphism of
$p^*(E \otimes L) = p^*(E) \otimes p^*(L)$.

It follows from the definition of the product in the K-theory of rings
$K_0(S) \otimes K_1(S) \to K_1(S)$,
that $[p^* L] \cdot [\alpha_E]$ = $[\alpha_{E \otimes L}].$

\end{proof}

\section{The slice filtration}

We work in the category $\DMeff$ of motivic complexes defined by Voevodsky \cite{V0}.
Recall that $\DMeff$ is a tensor triangulated category which admits a covariant monoidal functor
$$M: Sm/F \to \DMeff,$$
satisfying the usual properties such as Mayer-Vietoris and localization distinguished
triangles.

For any smooth variety $X$ there is a splitting
$$
M(X) = \widetilde{M}(X) \oplus \Tate,
$$
where $\widetilde{M}(X)$ is defined to be the cone of the canonical
morphism $M(X) \to M(pt) = \Tate$.

Motivic cohomology groups 
of degree $p \in \ZZ$ and weight $q \ge 0$)
are defined to be
$$
H^{p,q}(X) := Hom_{\DMeff}(M(X), \Tate(q)[p]),
$$
so that distinguished triangles in $\DMeff$ become long exact sequences in motivic cohomology of each weight.

We also use reduced motivic cohomology groups
$$
\widetilde{H}^{p,q}(X) := Hom_{\DMeff}(\widetilde{M}(X), \Tate(q)[p]).
$$

Consider the Cech simplicial scheme $\XX = \check{C}(SB(A)$ \cite{V}.
$\XX$ is defined such that $\XX_k = SB(A)^{k+1}$ and the face and degeneracy maps
are taken to be partial projections and diagonals. 
The canonical morphism $M(\XX) \to \Tate$ is an isomorphism if $SB(A)$ has
an $F$-point (i.e. if algebra $A$ splits).

We introduce a tensor triangulated category $\DMeffXX$ of motives
over $\XX$ as the full subcategory of $\DMeff$,
consisting of objects $M$ satisfying the property that
the canonical morphism
$$M \otimes M(\XX) \to M \otimes \Tate = M$$ 
is an isomorphism \cite{V1}.
Note that $\XX$ is an {\it embedded} simplicial scheme, which means
that $M(\XX) \otimes M(\XX) = M(\XX)$ and so $M(\XX)$ is an object
in $DM^{eff}_\XX$. We will write $\Tate_\XX$ for $M(\XX)$.

The full embedding $\DMeffXX \subset \DMeff$ admits a right adjoint
functor 
$$\Phi: \DMeff \to \DMeffXX,$$
which on objects is defined to be
$$\Phi(M) = M \otimes M(\XX).$$

\begin{remark}
\label{CohomAdj}
It follows from the adjunction property that
for any motive $M$ in $\DMeffXX$, $q \ge 0$, $p \in \ZZ$
$$H^{p,q}(M, \ZZ) = Hom_{\DMeff}(M, \Tatemixed{q}{p}) \cong
Hom_{\DMeffXX}(M, \Tate_\XX(q)[p]).$$    
\end{remark}

Let $DT(\XX) \subset \DMeffXX$ denote the subcategory of effective Tate motives over $\XX$.

We consider the {\it slice filtration} on these categories as defined in \cite{V1} (see also \cite{HK}) for any object $M$ of $\DMeffXX$. For each $q \ge 0$ the $q$-th term of the slice
filtration is given by:

$$
\Filt{q} M = \underline{Hom}_{\DMeff)}(\Tate(q), M)(q) \otimes \Tate_\XX.
$$

The internal $Hom$-object above exists by \cite{V0}, Proposition 3.2.8.

\begin{remark}
It is easy to see using the adjunction property that
$$
\underline{Hom}_{\DMeff}(\Tate(q), M)(q) \otimes M(\XX)
$$
is in fact isomorphic to
$$
\underline{Hom}_{\DMeffXX}(\Tate_\XX(q), M). 
$$
On the other hand, if $M$ is an object in $DT(\XX)$, then
$\underline{Hom}_{\DMeffXX}(\Tate_\XX(q), M)$ is isomorphic to
$\underline{Hom}_{DT(\XX)}(\Tate_\XX(q), M)$,
so that for Tate motives our slice filtration coincides with
the one from \cite{V1}.

\end{remark}

We define $\Piece{q}$ as the cone in the distinguished triangle
$$
\Filt{q+1}(M) \to \Filt{q}(M) \to \Piece{q}(M) \to \Filt{q+1}(M)[1].
$$

One can see that the slice filtration $\{\Filt{q}\}$ is functorial, commutes
with extension of scalars and that 
for each $k,j \ge 0$ and $l \in \ZZ$ satisfies
$$
\Filt{k+j} (M(j)[l]) = \Filt{k}(M)(j)[l]
$$
and
$$
\Filt{k}(M \oplus M') = \Filt{k}(M) \oplus \Filt{k}(M'). 
$$

\begin{remark}
\label{FiltSplitTate}
For a split Tate motive 
$M = \oplus_{p,q} \Tate_\XX(p)[q]^{\oplus a_{p,q}}$
we have 
$$\Filt{k}(M) = \oplus_{p \ge k,q} \Tate_\XX(p)[q]^{\oplus a_{p,q}}$$
and
$$\Piece{k}(M) = \oplus_{q} \Tate_\XX(k)[q]^{\oplus a_{k,q}}.$$
\end{remark}

The following two lemmas provide examples of motives lying in $\DMeffXX$ and $DT(\XX)$ respectively.

\begin{lemma}
\label{XXInvar}
Let $T$ be a variety over $F$.

1. If $T$ is smooth and for each generic point $\eta$ of $T$ $A_{F(\eta)}$ is a split
algbera then $M(T)$ lies in $\DMeffXX$.

2. Let $T \subset S$ be a closed embedding of $T$ into a smooth variety $S$.
If for each scheme-theoretic point $z \in T \; A_{F(z)}$ is a split algebra
then $M_T(S)$ lies in $\DMeffXX$.

\end{lemma}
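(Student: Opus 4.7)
The plan is to treat the two parts in order, deriving part 2 from part 1 by a standard devissage. The crucial input for part 1 is that the generic-splitting hypothesis, combined with smoothness of $T$, actually forces $A$ to be Zariski-locally a matrix algebra on all of $T$, not merely at the generic points. Once this is established, each small enough open admits a section to $SB(A)$, the Cech simplicial scheme trivializes, and Mayer--Vietoris finishes.

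For part 1, given any point $t \in T$, the local ring $\OO_{T,t}$ is regular since $T$ is smooth, so the restriction map $\mathrm{Br}(\OO_{T,t}) \to \mathrm{Br}(F(T))$ is injective (Auslander--Goldman). Because $A$ is split at the generic point of the component of $T$ through $t$, $A \otimes \OO_{T,t}$ has trivial Brauer class; it must therefore be $\mathrm{End}(V)$ for some finitely generated projective $\OO_{T,t}$-module $V$. Such a $V$ is free over the local ring, and a rank comparison yields $A \otimes \OO_{T,t} \cong M_n(\OO_{T,t})$. Spreading this trivialization produces a Zariski open cover $\{U_i\}$ of $T$ on which $A$ is globally a matrix algebra. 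For any such $U$, $SB(A)_U = \Proj^{n-1}_U$ admits a section $U \to SB(A)$, which supplies an extra degeneracy in the simplicial scheme $U \times \XX = \check{C}(U \times SB(A) \to U)$, making the projection to $U$ a simplicial homotopy equivalence. Hence $M(U) \otimes M(\XX) \to M(U)$ is an isomorphism in $\DMeff$, i.e.\ $M(U) \in \DMeffXX$. The same argument applies to every finite intersection $U_{i_0} \cap \cdots \cap U_{i_k}$, and Mayer--Vietoris for Zariski covers in $\DMeff$, together with the fact that $\DMeffXX$ is a triangulated subcategory, then gives $M(T) \in \DMeffXX$.

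For part 2, the hypothesis that $A$ splits at every scheme-theoretic point of $T$ makes the stratification argument go through. Since $F$ is perfect, $T$ admits a finite filtration $T = T^{(0)} \supset T^{(1)} \supset \cdots \supset T^{(r)} = \emptyset$ by closed subsets such that each stratum $T^{(i)} \setminus T^{(i+1)}$ is smooth of pure codimension $c_i$ in the smooth variety $S \setminus T^{(i+1)}$. Combining the localization triangles for motives with supports with the Gysin isomorphism
$$M_{T^{(i)} \setminus T^{(i+1)}}(S \setminus T^{(i+1)}) \cong M(T^{(i)} \setminus T^{(i+1)})(c_i)[2c_i]$$
reduces the claim to showing $M(T^{(i)} \setminus T^{(i+1)}) \in \DMeffXX$ for each stratum. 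But each such stratum is smooth and contained in $T$, so $A$ splits at every one of its points, and part 1 applies.

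The main obstacle is the Auslander--Goldman step in part 1: without Brauer-group injectivity for regular local rings, generic splitting need not propagate Zariski-locally and the extra-degeneracy/Mayer--Vietoris machinery would never get started. The remaining ingredients---Cech contractibility in the presence of a section, Zariski Mayer--Vietoris in $\DMeff$, the Gysin isomorphism, and the existence of smooth stratifications over a perfect field---are routine invocations of standard properties of Voevodsky motives.
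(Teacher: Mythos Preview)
Your proof is correct, and for part 2 you do essentially what the paper does: stratify $T$ by smooth locally closed pieces, use the localization triangle for motives with supports together with Gysin, and reduce to part 1 applied to each stratum.

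For part 1 you take a genuinely different route. The paper simply observes that one must show $M(T)\otimes C=0$ with $C=\mathrm{cone}(M(\XX)\to\Tate)$ and cites Voevodsky's Lemma~4.5 (from the $\ZZ/2$-coefficients paper) as a black box: that lemma says exactly that this cone is killed by $M(T)$ once the Cech variety has points over each generic point of $T$. You instead upgrade the hypothesis ``split at generic points'' to ``Zariski-locally split on all of $T$'' via Auslander--Goldman injectivity of $\mathrm{Br}(\OO_{T,t})\to\mathrm{Br}(F(T))$, and then run an extra-degeneracy plus Mayer--Vietoris argument by hand. Your argument is more self-contained and makes the geometry explicit (one actually sees the sections of $SB(A)$), at the cost of invoking a nontrivial piece of Brauer-group theory; the paper's citation is shorter and stays purely within the motivic framework, but hides the mechanism. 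Both are valid, and your version has the minor advantage of making clear why smoothness of $T$ is genuinely used in part 1.
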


\begin{proof}
To prove the first statement, we need to show that 
$M(T) \otimes C = 0$ where $C = cone(M(\XX) \to \Tate)$.
This follows from \cite{V}, Lemma 4.5.
To prove the second statement, we filter $T$ by closed subvarieties 
$$T_N \subset T_{N-1} \subset \dots \subset T_{1} \subset T_0 = T \subset S$$
where $T_k \backslash T_{k+1}$ are nonsingular.
We prove by the descending induction on $k$ that $M_{T_k}(S)$ is an object in $\DMeffXX$.
The base case $k = N$ follows from the first statement of the Lemma.
For the induction step, we use the distinguished triangle in $\DMeff$
$$
M_{T_k \backslash T_{k+1}}(S \backslash T_{k+1}) \to M_{T_k}(S) \to M_{T_{k+1}}(S)
\to M_{T_k \backslash T_{k+1}}(S \backslash T_{k+1})[1]
$$
of the triple
$$
(S \backslash T_{k+1}, S \backslash T_{k}) \subset (S, S \backslash T_{k}) 
\subset (S, S \backslash T_{k+1}). 
$$
Since by induction hypothesis and by applying the first part of the Lemma again,
$M_{T_{k+1}}(S)$ and $M_{T_k \backslash T_{k+1}}(S \backslash T_{k+1})$ lie in $\DMeffXX$, $M_{T_k}(S)$ also lies in $\DMeffXX$.

\end{proof}

\begin{lemma}
\label{TwistedTate}
Let $M$ be an object in $\DMeffXX$. Assume that $M_{F(SB(A))}$ is a split Tate motive 
of the form $\oplus_{p,q} \Tate(p)[q]^{\oplus a_{p,q}}$.
Then the slice filtration of $M$ in $DM_\XX$ has successive cones which are split Tate motives
$$
\Piece{p}(M) = \oplus_{q} \Tate_{\XX}(p)[q]^{\oplus a_{p,q}}.
$$
In particular, $M$ is a mixed Tate motive over $\XX$.
\end{lemma}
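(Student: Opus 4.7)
The strategy is to base-change everything to $F(SB(A))$, where $A$ splits and $\XX$ becomes contractible, reducing the statement to Remark \ref{FiltSplitTate}; then to lift the resulting identification back over $\XX$. The main ingredients are the two properties of the slice filtration recorded just before the lemma --- functoriality and compatibility with extension of scalars --- together with the adjunction of Remark \ref{CohomAdj}.

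Over $F(SB(A))$ the scheme $\XX$ has a rational vertex and is therefore contractible, so $(\Tate_\XX)_{F(SB(A))} = \Tate$, and by hypothesis $M_{F(SB(A))}$ is the split Tate motive $\bigoplus_{p,q} \Tate(p)[q]^{\oplus a_{p,q}}$. Remark \ref{FiltSplitTate} gives
$$\Piece{p}(M_{F(SB(A))}) = \bigoplus_q \Tate(p)[q]^{\oplus a_{p,q}},$$
and base-change compatibility of the slice filtration transfers this to the formula $\Piece{p}(M)_{F(SB(A))} \cong \bigoplus_q \Tate(p)[q]^{\oplus a_{p,q}}$.

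The remaining task is to lift this identification back over $\XX$. I would proceed by downward induction on $p$; once every higher slice has been identified as a split twisted Tate motive over $\XX$, I use Remark \ref{CohomAdj} to realize the basis classes of $\Piece{p}(M)_{F(SB(A))}$ as morphisms $\Tate_\XX(p)[q] \to \Piece{p}(M)$ in $\DMeffXX$ and assemble them into
$$\varphi_p \colon \bigoplus_q \Tate_\XX(p)[q]^{\oplus a_{p,q}} \longrightarrow \Piece{p}(M).$$
By construction $\varphi_p$ is an isomorphism after restriction to $F(SB(A))$. The main obstacle --- and essentially the only nontrivial step --- is to deduce that $\varphi_p$ is then also an isomorphism over $\XX$, equivalently that its cone, an object of $\DMeffXX$ vanishing geometrically over $F(SB(A))$, is zero. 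This is where Voevodsky's machinery for motives over the Čech simplicial scheme genuinely enters, via conservativity of base change on motives of the pure slice form $N(p) \otimes \Tate_\XX$ that every slice takes by its very definition. Granted this conservativity, assembling the slices through the defining distinguished triangles $\Filt{p+1}(M) \to \Filt{p}(M) \to \Piece{p}(M) \to \Filt{p+1}(M)[1]$ shows that $M$ itself is a mixed Tate motive over $\XX$.
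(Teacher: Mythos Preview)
Your overall strategy---compute the slices after base change to $F(SB(A))$, then lift the splitting back to $\XX$ and show the cone of the comparison map vanishes---is the paper's strategy, and the final conservativity step (an object of $\DMeffXX$ that vanishes over $F(SB(A))$ is zero, via \cite{V}, Lemma 4.5) is exactly what the paper uses. The downward induction on $p$ is unnecessary: each slice is handled independently.

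The genuine gap is in the lifting step. You invoke Remark \ref{CohomAdj}, but that adjunction only identifies $Hom_{\DMeff}(N,\Tate(q)[p])$ with $Hom_{\DMeffXX}(N,\Tate_\XX(q)[p])$ for $N\in\DMeffXX$; it does not lift morphisms from $F(SB(A))$ back to $F$. The tool that does this is Lemma \ref{CohomXX}, which the paper states and proves for exactly this purpose: the restriction $H^{q,0}(N)\to H^{q,0}(N_{F(SB(A))})$ is an isomorphism for every $N$. Moreover, the direction of your map matters. Lemma \ref{CohomXX} controls $Hom(c_p(M),\Tate[q])=H^{q,0}(c_p(M))$, i.e.\ maps \emph{out of} the slice into weight-zero Tate objects; that is why the paper builds
\[
\phi_p:\Piece{p}(M)\longrightarrow\bigoplus_q\Tate_\XX(p)[q]^{\oplus a_{p,q}},
\]
using cancellation and then Lemma \ref{CohomXX} to identify the relevant Hom group with $\ZZ^{\oplus a_{p,q}}$. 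Your reverse map $\varphi_p$ would require controlling $Hom(\Tate_\XX,c_p(M)[-q])$, for which no analogous base-change statement is available here.
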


For the proof we need the following lemma, which we borrow
from \cite{Sus}.

\begin{lemma}
\label{CohomXX}
For any $M$ from $\DMeff$ and $p \in \ZZ$
the extension of scalars
$H^{p,0}(M) \to H^{p,0}(M_{F(SB(A))})$ 
is an isomorphism.
\end{lemma}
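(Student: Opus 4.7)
The plan is to reduce to the case $M = M(X)$ for $X$ smooth over $F$ and then to observe that weight-zero motivic cohomology only detects connected components. Both assignments $M \mapsto H^{p,0}(M)$ and $M \mapsto H^{p,0}(M_{F(SB(A))})$ are contravariant cohomological functors on $\DMeff$, and base change provides a natural transformation between them. The full subcategory of $\DMeff$ on which this transformation is an isomorphism is thus triangulated and stable under direct summands, and since $\DMeff$ is generated by motives of smooth varieties, it suffices to check the isomorphism for $M = M(X)$.

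For such $X$, the group $H^{p,0}(X) = \mathrm{Hom}_{\DMeff}(M(X), \Tate[p])$ is identified with the Nisnevich (equivalently Zariski) cohomology of $X$ with values in the constant sheaf $\Tate$. This vanishes for $p \ne 0$ on any smooth $F$-scheme and equals $\Tate^{\pi_0(X)}$ for $p = 0$. The same description applies to $X_{F(SB(A))}$, and the base change map is induced by the projection $X_{F(SB(A))} \to X$. Hence everything reduces to checking that the induced map $\pi_0(X) \to \pi_0(X_{F(SB(A))})$ is a bijection.

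The essential geometric input is that $SB(A)$ is geometrically integral, being a twisted form of $\Proj^{n-1}$. Consequently $F(SB(A))/F$ is a regular field extension: $F$ is algebraically closed in $F(SB(A))$ and the extension is separable. By the definition of regularity, $Y \otimes_F F(SB(A))$ remains integral whenever $Y$ is; applied to each connected (hence irreducible, since $X$ is smooth) component of $X$, this produces the required bijection of connected components.

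The principal subtlety I anticipate is the initial reduction to generators: one must verify that both cohomological functors behave correctly with respect to triangles and direct summands, and that $\DMeff$ is indeed the thick subcategory generated by motives of smooth varieties. Once this formality is in place, the heart of the argument is the regularity of $F(SB(A))/F$, after which the remaining bookkeeping on connected components is routine.
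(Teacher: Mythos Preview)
Your proof is correct and follows essentially the same line as the paper's: reduce to motives of smooth varieties, identify $H^{p,0}$ with $\ZZ^{\pi_0}$ concentrated in degree $0$, and invoke the geometric integrality of $SB(A)$ to control connected components under base change. Your phrasing via regularity of the extension $F(SB(A))/F$ is in fact slightly cleaner than the paper's appeal to ``geometrically irreducible with separably generated function field'', and your acknowledged subtlety about generation of $\DMeff$ by smooth motives is exactly the step the paper leaves implicit as well.
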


\begin{proof}
It is sufficient to prove the statement in the case $M = M(S)[j]$
where $S$ is a smooth connected scheme over $F$.
In this case the homomorphism in question takes the form:
$$H^{p-j,0}(S) \to H^{p-j,0}(S_{F(SB(A))}),$$
and both groups are equal $0$ for $p \ne j$.

$S$ is connected, and $SB(A)$ being geometrically irreducible
has separably generated function field $F(SB(A))$, hence
$S_{F(SB(A))}$ is connected as well.
Therefore if $p=j$ both cohomology groups in question
are isomorphic to $\ZZ$ with the map being the identity.

\end{proof}

Now we can prove Lemma \ref{TwistedTate}.
\begin{proof}
Let $\Piece{p} M = c_p(M)(p)$. Then
\begin{align*}
Hom(\Piece{p} M, \Tate_\XX(p)[q]) \\
= Hom(c_p(M), \Tate_{\XX}[q]) & \; \text{by the cancellation theorem} \\
=  H^{q,0}(c_p(M), \ZZ) & \; \text{by Remark \ref{CohomAdj}} \\
= H^{q,0}(c_p(M_{F(SB(A))}), \ZZ) & \; \text{by Lemma \ref{CohomXX}} \\
= H^{q,0}(\oplus_{r} \Tate[r]^{\oplus a_{p,r}}, \ZZ) & \; \text{by Remark \ref{FiltSplitTate}} \\
= \Tate^{\oplus a_{p,q}}.
\end{align*}

Therefore there exists a morphism 
$\phi_p: \Piece{p} M \to \oplus_{q} \Tate_{\XX}(p)[q]^{\oplus a_{p,q}}$ 
such that $\phi_p$ becomes an isomorphism after scalar extension to $F(SB(A))$.
This implies that $cone(\phi_p)_{F(SB(A))} = 0$, so that $cone(\phi_p) = cone(\phi_p) \otimes M(\XX) = 0$ by \cite{V}, Lemma 4.5, and thus $\phi_p$ is an isomorphism.
\end{proof}

\begin{remark}
As the example of $M = M(SB(A))$ shows, $M$ itself is not always a {\it split} Tate motive.
Indeed it is a result of Karpenko \cite{Kar} that for a division algebra
$A$, $M(SB(A))$ is indecomposable.
\end{remark}

\begin{example}
Let $A = \quat$, and let $M_{a,b} = M(SB(A))$ be the Rost motive.
In this case the slice filtration is the  distinguished triangle
$$\Tate_\XX(1)[2] \to M_{a,b} \to \Tate_\XX \to \Tate_\XX(1)[3]$$
from \cite{V}, Theorem 4.4. 
\end{example}

In section 5 we will need the following version of the Rost nilpotence theorem (cf \cite{CGM}, Cor. 8.4 and \cite{R}, Cor. 10),
which is a corollorary of Lemma \ref{CohomXX} and the existence of the slice filtration:

\medskip

\begin{proposition}
\label{RostNilp}
 Let $M = \bigoplus_{k=0}^n \mathbb
Z\{i_k\}$ be a pure Tate motive. Let $f: M(SB(A)) \otimes M \to
M(SB(A)) \otimes M$ be a morphism of motives. If $f_{F(SB(A))}$ is an
isomorphism then $f$ is an isomorphism.
\end{proposition}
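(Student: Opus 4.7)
The plan is to use the slice filtration on $N := M(SB(A)) \otimes M$ to reduce to the case of split Tate motives over $\XX$, and then exploit the fact that morphisms between split Tate motives in $\DMeffXX$ are given by integer matrices which do not change under extension to $F(SB(A))$.

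First I would verify that $N \in \DMeffXX$. By Lemma \ref{XXInvar} (part 1), $M(SB(A)) \in \DMeffXX$ since $SB(A)$ is smooth and $A$ splits over its generic point $F(SB(A))$. Consequently $N \otimes M(\XX) = (M(SB(A)) \otimes M(\XX)) \otimes M = M(SB(A)) \otimes M = N$, so $N \in \DMeffXX$. Next, over a splitting field $SB(A)$ becomes $\Proj^{n-1}$, so $M(SB(A))_{F(SB(A))} = \bigoplus_{i=0}^{n-1} \Tatepure{i}$, and therefore $N_{F(SB(A))}$ is a finite split Tate motive. Lemma \ref{TwistedTate} then applies: the slice filtration $\Filt{q}(N)$ is finite and each successive cone is of the form $\Piece{p}(N) = \bigoplus_q \Tate_\XX(p)[q]^{\oplus a_{p,q}}$.

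By functoriality of the slice filtration, $f$ induces an endomorphism $\Piece{p}(f)$ of $\Piece{p}(N)$ for each $p$, and these commute with scalar extension. The crucial step is to show each $\Piece{p}(f)$ is an isomorphism. Using the cancellation theorem together with Remark \ref{CohomAdj} and Lemma \ref{CohomXX},
\[
\mathrm{Hom}_{\DMeffXX}(\Tate_\XX(p)[q],\Tate_\XX(p)[q'])= H^{q'-q,0}(\Tate_\XX) \cong H^{q'-q,0}(\mathrm{Spec}\,F),
\]
which is $\Tate$ when $q = q'$ and zero otherwise; moreover this $\Tate$ is preserved by extension of scalars to $F(SB(A))$, where the same computation yields the same abelian group. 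Therefore $\Piece{p}(f)$ is described by a finite collection of integer matrices (one for each shift $q$), and the matrices representing $\Piece{p}(f)_{F(SB(A))}$ are literally the same integer matrices. Since $f_{F(SB(A))}$ is an isomorphism of split Tate motives, each $\Piece{p}(f)_{F(SB(A))}$ is an isomorphism, so the corresponding integer matrices are invertible, hence $\Piece{p}(f)$ is itself an isomorphism.

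Finally, since the slice filtration of $N$ is finite, I can induct downward on $p$: using the defining distinguished triangle
\[
\Filt{p+1}(N) \to \Filt{p}(N) \to \Piece{p}(N) \to \Filt{p+1}(N)[1]
\]
together with the five-lemma on the induced morphism of triangles, the assumption that $f$ restricts to an isomorphism on $\Filt{p+1}(N)$ and on $\Piece{p}(N)$ implies it restricts to an isomorphism on $\Filt{p}(N)$. Starting from the vanishing tail and descending to $p=0$ yields that $f = \Filt{0}(f)$ is an isomorphism. I expect the main technical point to be the identification of morphisms between the split Tate pieces with integer matrices that are invariant under extension; once this rigidity is established the filtration argument is routine.
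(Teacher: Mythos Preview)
Your proof is correct and follows essentially the same route as the paper: both arguments use the slice filtration on $M(SB(A))\otimes M$, invoke Lemma~\ref{TwistedTate} to identify the slices as split Tate motives over $\XX$, and then appeal to Lemma~\ref{CohomXX} to see that the induced maps on slices are integer matrices unchanged under extension to $F(SB(A))$. You have simply made explicit the verification that $N\in\DMeffXX$ and the final descending five-lemma induction on the filtration, which the paper leaves implicit.
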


\begin{proof}
Consider the slice filtration on $M(SB(A)) \otimes M$. By Lemma \ref{TwistedTate}
the slices $\Piece{p}(M(SB(A)) \otimes M)$ are equal to $\Tate_\XX\{p\}^{\oplus a_p}$,
for some $a_p \ge 0$.  
The morphisms induced on the slices 
are given by matrices with coefficients in $Hom(\Tate_\XX, \Tate_\XX)$,
and this group is identified with $\ZZ$ after extension of scalars to $F(SB(A))$
by Lemma \ref{CohomXX}.

\end{proof}

\medskip

\medskip

The slice filtration gives rise to an exact couple for each weight $j$
$$
E^{p,q} = H^{p+q}(\Piece{q}(M), \Tate(j)),
$$
$$
D^{p,q} = H^{p+q}(M, \Filt{q+1}(M), \Tate(j)),
$$
$$
\dots \to D^{p+1,q-1} \to D^{p,q} \to E^{p,q} \to D^{p+2,q-1} \to \dots
$$
and the corresponding spectral sequence
$$
E_2^{p,q} = H^{p+q}(\Piece{q}(M), \Tate(j)) \Rightarrow H^{p+q}(M, \Tate(j)),
$$
with the differential $d_2: H^{p+q-1}(\Piece{q+1} M, \Tate(j)) \to H^{p+q}(\Piece{q}M, \Tate(j))$
induced by a composition of morphisms forming the slice filtration:
$$
\Piece{q}(M) \to \Filt{q+1}(M)[1] \to \Piece{q+1}(M)[1].
$$


\section{The case of $GL_1(A)$}

\subsection{The split case}

We consider the group variety $GL_n(F)$ over a field $F$.
To give an explicit description of $M(GL_n(F))$ we use the
higher Chern classes $c_{1,i}$ for motivic cohomology
$$
c_i = c_{1,i}: K_1(X) \to H^{2i-1,i}(X), \; i \ge 1.
$$
as defined by Pushin \cite{Pu}. The classes $c_i$ are functorial, additive, and
admit the following product formula: if $[L] \in K_0(X)$ represents a class of a line bundle $L$, with $\lambda = c_1(L)$ and $[\alpha] \in K_1(X)$, then for
$[L] \cdot [\alpha]$ we have
$$
c_k([L] \cdot [\alpha]) = \sum_{i=1}^{k-1} (-1)^i \binom{k-1}{i} \lambda^i \, c_{k-i}(\alpha) =
$$
$$
c_k(\alpha) - (k-1)\lambda \, c_{k-1}(\alpha) + \frac{(k-1)(k-2)}{2} \lambda^2 \, c_{k-2}(\alpha) + \ldots +(-1)^{k-1} \lambda^{k-1} \, c_{1}(\alpha).
$$

\medskip

For a multi-index 
$$I = \{1 \le i_1 < \dots < i_r \le n\}$$ we let
$$|I| = i_1 + \dots + i_r$$
$$l(I) = r$$
$$c_I(\alpha) = c_{i_1}(\alpha) \cdot \dots \cdot c_{i_1}(\alpha) \in
H^{2|I|-l(I), |I|}(X).$$ 

\begin{proposition}
\label{MotiveGLn}
The motive $M(GL_n(F))$ admits the following direct sum decomposition:
$$
M(GL_n(F)) \cong 
\bigoplus_{I} \Tate (|I|)[2|I|-l(I)], 
$$
where the morphism
$$
M(GL_n(F)) \to \Tate (|I|)[2|I|-l(I)]
$$
corresponds to the class
$$
c_I(\alpha) \in H^{2|I|-l(I),|I|}(GL_n(F)),
$$
$[\alpha]$ is the tautological class in 
$K_1(GL_n(F))$ defined in the paragraph preceeding Proposition \ref{SplitGL1}.

\end{proposition}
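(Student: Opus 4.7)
I would induct on $n$, using a fibration of $GL_n$ to reduce to the previous case and the product formula for Pushin's Chern classes to identify the summand projections.

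The base case $n=1$ amounts to $M(\Gm) = \Tate \oplus \Tatemixed{1}{1}$, with the projection to the second summand given by $c_1(\alpha) \in H^{1,1}(\Gm)$, matching the subsets $I \in \{\emptyset, \{1\}\}$. For the inductive step, I would consider the morphism $\pi: GL_n \to \Aff^n \setminus \{0\}$ sending a matrix to its first column $\alpha(e_1)$. A direct computation with affine charts on $\Aff^n \setminus \{0\}$ exhibits $\pi$ as Zariski-locally trivial, with fibers isomorphic as varieties to $GL_{n-1} \times \Aff^{n-1}$ (the stabilizer of $e_1$, i.e.\ its Levi times the unipotent radical). A K\"unneth-style argument for this bundle combined with homotopy invariance for the $\Aff^{n-1}$-factor yields
$$ M(GL_n) \cong M(\Aff^n \setminus \{0\}) \otimes M(GL_{n-1}) \cong M(GL_{n-1}) \oplus M(GL_{n-1})(n)[2n-1]. $$
Feeding this into the inductive hypothesis and reindexing subsets $I \subseteq \{1,\dots,n\}$ according to whether $n \in I$ then produces the claimed abstract direct sum decomposition of $M(GL_n)$ at the level of Tate motives.

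Matching the projection to each summand with the morphism $c_I(\alpha)$ proceeds as follows. The generator of the $\Tatemixed{n}{2n-1}$-summand of $M(\Aff^n \setminus \{0\})$ pulls back along $\pi$ to $c_n(\alpha)$, since $c_n$ is precisely the obstruction class measuring nonvanishing of $\alpha(e_1)$. For $I$ with $n \notin I$, restriction to a fiber identifies $\alpha$ with $\alpha_0 \in K_1(GL_{n-1})$, so by functoriality $c_I(\alpha)$ restricts to $c_I(\alpha_0)$, agreeing with the inductive hypothesis. For $I \ni n$, one uses the product formula stated above: along a local trivialization $\pi^{-1}(U) \cong U \times GL_{n-1}$, the class $\alpha$ decomposes (in the spirit of Proposition \ref{SplitGL1}) as a combination of a tautological line bundle on the base and $\alpha_0$ on the fiber; plugging this into the binomial expansion for $c_k([L] \cdot [\alpha_0])$ and multiplying out converts $c_I(\alpha)$ into the external product of $c_n$ on the base with $c_{I \setminus \{n\}}(\alpha_0)$ on the fiber. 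Since both sides of the claimed isomorphism are split Tate motives, this pointwise identification on motivic cohomology suffices to conclude that the assembled map $M(GL_n) \to \bigoplus_I \Tatemixed{|I|}{2|I|-l(I)}$ is an isomorphism.

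The main obstacle is this last bookkeeping step: one must track how the class $\alpha \in K_1(GL_n)$ decomposes along a trivialization of $\pi$ and then carefully apply the additivity and product formulas for Pushin's Chern classes to extract the K\"unneth-type identity on nose. Handled systematically this is formal, but it is the only place where the combinatorial structure of the product formula is used in an essential way; once it is in hand, the decomposition and the identification of the projections follow together.
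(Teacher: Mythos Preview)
Your approach is genuinely different from the paper's. The paper's proof is much shorter: it cites Biglari's thesis to conclude that $M(GL_n(F))$ is a Tate motive (via the Bruhat decomposition of split reductive groups), cites Pushin's Lemma~13 to conclude that the classes $c_I(\alpha)$ freely generate the motivic cohomology of $GL_n(F)$, and then observes that for Tate motives a map is an isomorphism if and only if it is one on motivic cohomology. Your inductive argument via the fibration $\pi: GL_n \to \Aff^n \setminus \{0\}$ is a legitimate and more self-contained route to the first of these two inputs: the Zariski-local triviality of $\pi$, Mayer--Vietoris, and homotopy invariance do combine to give the abstract splitting $M(GL_n) \cong M(GL_{n-1}) \oplus M(GL_{n-1})(n)[2n-1]$, hence the correct Tate decomposition.

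The gap is in your identification of the projections for $I \ni n$. You propose to decompose $\alpha$ along a trivialization of $\pi$ ``as a combination of a tautological line bundle on the base and $\alpha_0$ on the fiber'' and then apply the product formula for $c_k([L]\cdot[\alpha_0])$. But this fibration is not of that shape: for $n \ge 2$ the base $\Aff^n \setminus \{0\}$ has trivial Picard group, so there is no line bundle $L$ to play the role of $\OO(1)$ in Proposition~\ref{SplitGL1}, and the fiber is the stabilizer of $e_1$ (a $GL_{n-1}\ltimes \Aff^{n-1}$), not a $\GL(E)$ for a rank-$(n-1)$ bundle twisted by a line. The restriction of $\alpha$ to the fiber is $[1 \oplus \alpha_0]$, not $[L]\cdot[\alpha_0]$, so the binomial product formula you quote simply does not apply here. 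Likewise the assertion that $\pi^*$ of the generator of $H^{2n-1,n}(\Aff^n\setminus\{0\})$ equals $c_n(\alpha)$ is plausible but not established by the heuristic ``obstruction to nonvanishing of $\alpha(e_1)$''. What you actually need at this point is precisely the statement that the $c_I(\alpha)$ form a free basis of $H^{*,*}(GL_n)$, which is the content of Pushin's computation and is what the paper cites directly. Your inductive framework could in principle reprove that lemma, but it requires a different bookkeeping (tracking how $c_n(\alpha)$ interacts with the Gysin map for $\Aff^n\setminus\{0\} \subset \Aff^n$, not a line-bundle product formula), and that is the piece missing from your sketch.
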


\begin{proof}
We define the morphism
$$
\phi: M(GL_n(F)) \to \bigoplus_{I} \Tate (|I|)[2|I|-l(I)]
$$
using the classes $c_I$. We claim that $\phi$ is an isomoprhism.

First note, that for any reductive split group $G$ over $F$ the
motive $M(G)$ is a Tate motive.
This is deduced in Biglari's thesis \cite{Big} from the Bruhat decomposition
of $G$.  

Since $M(GL_n(F))$ is a Tate motive, by the Yoneda lemma it is sufficient to check that 
$\phi$ induces isomorphism on the motivic cohomology groups.

According to \cite{Pu}, Lemma 13, motivic cohomology of $GL_n(F)$ is generated freely
by the classes $c_I(\alpha)$
and the statement follows.

\end{proof}

\medskip

We also need the relative version of Proposition \ref{MotiveGLn}.

\begin{proposition}
\label{MotiveGLE}
Let $E \to T$ be a vector bundle of rank $n$, and let $\alpha_E$ be the
tautological class in $K_1(\GL(E))$.
The motive $M(\GL(E))$ admits the following decomposition:
$$
M(\GL(E)) =
\bigoplus_{I} M(T) (|I|)[2|I|-l(I)]
$$
where the morphism
$$
M(\GL(E)) \to M(T)(|I|)[2|I|-l(I)]
$$
is the composition
$$
M(\GL(E)) \to M(\GL(E)) \otimes M(\GL(E)) \to M(\GL(E)) (|I|)[2|I|-l(I)] \to
$$
$$
M(T)(|I|)[2|I|-l(I)]
$$
of multiplication by the class
$$
c_I(\alpha_E) \in H^{2|I|-l(I),|I|}(\GL(E)).
$$
followed by the canonical projection.
\end{proposition}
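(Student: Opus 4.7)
The plan is to construct $\phi = \bigoplus_I \phi_I$ globally using Pushin's higher Chern classes of the tautological class $[\alpha_E] \in K_1(\GL(E))$, and then prove that $\phi$ is an isomorphism by Zariski descent on the base $T$. The morphism $\phi_I$ is assembled exactly as in the statement: the cohomology class $c_I(\alpha_E) \in H^{2|I|-l(I),|I|}(\GL(E))$ yields by Yoneda a morphism $M(\GL(E)) \to \Tate(|I|)[2|I|-l(I)]$, and tensoring with the projection $M(\GL(E)) \to M(T)$ after the diagonal gives $\phi_I$.

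The key local case is when $E$ is trivial over some open $U \subseteq T$. Then $\GL(E)|_U \cong U \times GL_n$, and the functoriality of the Gillet--Grayson construction identifies the restriction of $\alpha_E$ with $p_2^*\alpha_0$; hence $c_I(\alpha_E)|_U = p_2^* c_I(\alpha_0)$ by naturality of the Chern classes. Since $M(GL_n)$ is a split Tate motive by Biglari, the Künneth formula
$$M(U \times GL_n) \cong M(U) \otimes M(GL_n)$$
holds, and under this identification together with the decomposition from Proposition \ref{MotiveGLn}, the restriction $\phi|_U$ becomes $\mathrm{id}_{M(U)} \otimes \phi^{\mathrm{abs}}$ where $\phi^{\mathrm{abs}}$ is the absolute isomorphism of Proposition \ref{MotiveGLn}. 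Hence $\phi|_U$ is an isomorphism.

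To globalize, we invoke Zariski descent in $\DMeff$. By the Jouanolou trick (as used in the previous lemma) we may assume $T$ is affine, hence quasi-compact, so $E$ admits a finite trivializing cover $\{U_i\}$. We proceed by induction on the size of this cover: if $T = U \cup V$ and $\phi$ is already known to be an isomorphism over $U$, $V$ and $U \cap V$, the Mayer--Vietoris distinguished triangle
$$M(\GL(E)|_{U \cap V}) \to M(\GL(E)|_U) \oplus M(\GL(E)|_V) \to M(\GL(E)) \to [1]$$
together with the analogous term-wise triangle on the target direct sum and the $5$-lemma for triangulated categories produces an isomorphism over $T$. Naturality of the whole diagram is where care is required, and is the main technical point: it reduces to the compatibility of the tautological class $\alpha_E$ and of $c_I$ with the restriction maps, which is built into both constructions.
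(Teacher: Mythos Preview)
Your proposal is correct and follows essentially the same route as the paper's proof, which consists of the single sentence ``The statement follows from Proposition \ref{MotiveGLn} and the Mayer-Vietoris exact triangle.'' You have simply unpacked this: construct $\phi$ globally via the classes $c_I(\alpha_E)$, check that over a trivializing open it agrees with $\mathrm{id}\otimes\phi^{\mathrm{abs}}$ (hence is an isomorphism by Proposition \ref{MotiveGLn} and K\"unneth), and then induct on a finite trivializing cover using Mayer--Vietoris and the five-lemma. One small remark: the Jouanolou reduction to affine $T$ is harmless but unnecessary, since any smooth variety over $F$ is already quasi-compact and hence admits a finite trivializing cover.
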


\begin{proof}
The statement follows from Proposition \ref{MotiveGLn} and the Mayer-Vietoris exact triangle.

\end{proof}

\subsection{The case $n=2$}

Let $A = \quat$, and let $C = SB(A)$.
In this case $GL_1(A)$ is the complement to $Q \subset \Aff^4 - \{ 0 \}$
in $\Aff^4 - \{ 0 \}$,
where
$$Q = \{(x,y,z,w) \in \Aff^4 - \{ 0 \} : x^2 - ay^2 - bz^2 + abw^2 = 0\}.$$

\begin{lemma}
$M(Q) = M(C) \oplus M(C)(2)[3]$. 
\end{lemma}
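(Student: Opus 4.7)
The plan is to realize $Q$ as the complement of the zero section in the total space of the tautological right-ideal sheaf $\JJ$ on $C = SB(A)$ (which is a locally free $\OO_C$-module of rank $2$), and then to split the resulting Gysin triangle by a vanishing Euler-class argument. Abusing notation, let $\JJ$ also denote the total space of this rank-$2$ vector bundle, and view $C \subset \JJ$ via the zero section. The inclusion $\JJ \subset \OO_C \otimes A$ as a rank-$2$ subbundle in the trivial rank-$4$ bundle, composed with the second projection $C \times \Aff^4 \to \Aff^4$, yields a morphism of total spaces $\JJ \to \Aff^4$. On the complement $\JJ \setminus C$ this factors through $Q$, since any nonzero element of a proper right ideal of $A$ is non-invertible and hence has vanishing reduced norm. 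Conversely, for $q \in Q$ over any field extension, the right ideal $qA$ has $F$-dimension $2$ (because $A$ is simple of dimension $4$ and $q$ is a nonzero non-unit), so $q \mapsto (qA, q)$ supplies a canonical set-theoretic inverse. Both $\JJ \setminus C$ and $Q$ are smooth $F$-schemes of dimension $3$ and become isomorphic over any splitting field of $A$ (where $\JJ$ is $\OO_{\Proj^1}(-1)^{\oplus 2}$ by Remark~\ref{Jsplit} and $Q$ is the cone of singular $2 \times 2$ matrices minus the origin), so the morphism $\JJ \setminus C \to Q$ is an isomorphism by fpqc descent.

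With this identification in place, I would apply the Gysin distinguished triangle in $\DMeff$ for the zero section of the rank-$2$ vector bundle $\JJ \to C$:
\begin{equation*}
M(Q) \longrightarrow M(\JJ) \xrightarrow{\,e(\JJ)\,} M(C)(2)[4] \longrightarrow M(Q)[1].
\end{equation*}
Homotopy invariance identifies $M(\JJ) \cong M(C)$, and the connecting map is multiplication by the Euler class $e(\JJ) \in H^{4,2}(C)$. Since $C$ is a smooth curve, $H^{4,2}(C) = CH^2(C) = 0$ for dimension reasons; hence $e(\JJ) = 0$, the triangle splits, and $M(Q) \cong M(C) \oplus M(C)(2)[3]$, as claimed.

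The main obstacle is the scheme-theoretic identification $Q \cong \JJ \setminus C$: the set-theoretic bijection is immediate, but upgrading it to an isomorphism of $F$-schemes is the substantive step, most cleanly handled by descending the split-case isomorphism. Once this is in place, the rest is a one-line Gysin computation using the trivial vanishing of codimension-$2$ Chow groups on a curve.
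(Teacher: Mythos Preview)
Your argument is correct and takes a genuinely different route from the paper's. The paper first identifies the projective quadric $\overline{Q}\subset\Proj^3$ with $C\times C$, then invokes Proposition~\ref{SplitSB} to recognize $C\times C$ as a $\Proj^1$-bundle over $C$ (so $M(C\times C)=M(C)\oplus M(C)(1)[2]$), and finally realizes $Q$ as the complement of the zero section in the line bundle $\OO(-1)$ over $C\times C$; the desired decomposition is then read off from the resulting Gysin triangle. You bypass the projective quadric entirely by identifying $Q$ directly with the complement of the zero section in the rank-$2$ bundle $\JJ$ over the curve $C$; a single Gysin triangle then does the job, and the connecting morphism vanishes for the trivial reason that $\mathrm{Hom}_{\DMeff}(M(C),M(C)(2)[4])\cong CH^3(C\times C)=0$ on a surface. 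Your approach is shorter and more self-contained, while the paper's has the incidental geometric payoff of exhibiting $\overline{Q}\cong C\times C$ and tying the computation back to Proposition~\ref{SplitSB}. One small remark: what literally vanishes is the entire $\mathrm{Hom}$ group above, so your Euler-class justification, while morally right, is in fact stronger than needed.
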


\begin{proof}
First note that the projective quadric $\{x^2 - ay^2 - bz^2 + abw^2 = 0\} \subset \Proj^3$ 
is isomorphic to $C \times C$.
It follows from Proposition \ref{SplitSB} that $C \times C$ is a projective line bundle over $C$,
therefore
$$M(C \times C) = M(C) \oplus M(C)(1)[2].$$

$Q$ over $C \times C$ is the complement to the zero section in the line bundle $\OO(-1)$.
We have a distinguished triangle

$$
M(C)(1)[1] \oplus M(C)(2)[3] \to M(Q) \to M(C) \oplus M(C)(1)[2] \to M(C)(1)[2] \oplus M(C)(2)[4],
$$  

and the third morphism is the obvious one and the claim follows.
  
\end{proof}

\begin{proposition}
\label{MGLquat}
There is a decomposition
$$
M(GL_1(A)) = \Tate \oplus M(C)(1)[1] \oplus \Tate_{a,b}(3)[4],
$$
where $\Tate_{a,b} = cone(\Tate(1)[2] \to M(C))$.
\end{proposition}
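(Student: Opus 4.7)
The plan is to apply the Gysin localization triangle to the open embedding $GL_1(A) \hookrightarrow \Aff^4 \setminus \{0\}$, whose smooth closed complement is $Q$. The lemma just proved gives $M(Q) = M(C) \oplus M(C)(2)[3]$, and $M(\Aff^4 \setminus \{0\}) = \Tate \oplus \Tate(4)[7]$ is standard, so this triangle assumes the form
\begin{equation*}
M(C)(1)[1] \oplus M(C)(3)[4] \to M(GL_1(A)) \to \Tate \oplus \Tate(4)[7] \xrightarrow{\partial} M(C)(1)[2] \oplus M(C)(3)[5].
\end{equation*}

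Next I would analyze the four block components of $\partial$. Using the cancellation theorem and the identification $\mathrm{Hom}_{DM}(\Tate(q)[p], M(C)) = H^{2-p,1-q}(C)$ valid for the smooth projective conic $C$, three of the four Hom groups vanish for weight/dimension reasons, while the fourth, $\mathrm{Hom}(\Tate(4)[7], M(C)(3)[5]) \cong \mathrm{Hom}(\Tate(1)[2], M(C)) \cong \ZZ$, is freely generated by the $(3)[5]$-twist of the canonical map $\iota\colon \Tate(1)[2] \to M(C)$ whose cone defines $\Tate_{a,b}$. Thus $\partial$ is supported on the single block $\partial_1 = n \cdot \iota(3)[5]\colon \Tate(4)[7] \to M(C)(3)[5]$ for some integer $n$.

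The vanishing of the remaining three blocks allows splitting the triangle. The rational identity of $GL_1(A)$ peels off a $\Tate$-summand, and the zero boundary coming out of $M(C)(1)[1]$ lets that summand split off independently, giving $M(GL_1(A)) = \Tate \oplus M(C)(1)[1] \oplus N$, where $N$ sits in the residual triangle
\begin{equation*}
M(C)(3)[4] \to N \to \Tate(4)[7] \xrightarrow{n \cdot \iota(3)[5]} M(C)(3)[5].
\end{equation*}
Rotating and untwisting by $(3)[4]$ exhibits $N$ as the $(3)[4]$-shift of $\mathrm{cone}(n \cdot \iota)$, which is precisely $\Tate_{a,b}(3)[4]$ when $n = \pm 1$.

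The main obstacle is therefore to show $n = \pm 1$, and my plan is to reduce to the split case by base change to a field $F'/F$ over which $A$ splits. The generator of $\mathrm{Hom}(\Tate(1)[2], M(C)) = \ZZ$ is the fundamental class of $C$, which restricts to the fundamental class of $C_{F'} \cong \Proj^1$, so $n$ is preserved under base change. Over $F'$, Proposition \ref{MotiveGLn} pins down $M(GL_2(F')) = \Tate \oplus \Tate(1)[1] \oplus \Tate(2)[3] \oplus \Tate(3)[4]$ as a pure Tate motive; for the split Gysin triangle to match this pure Tate answer, the boundary must cancel both the $\Tate(4)[7]$ summand of the open motive and the top-weight summand $\Tate(4)[7] \subset M(\Proj^1)(3)[5]$, which occurs exactly when $n = \pm 1$. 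Hence $n = \pm 1$ already over $F$, and the desired identification $N \cong \Tate_{a,b}(3)[4]$ follows.
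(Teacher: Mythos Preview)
Your proposal is correct and follows essentially the same approach as the paper. The only cosmetic difference is that the paper passes to the reduced motive $\widetilde{M}(GL_1(A))$ from the outset (so the third term of the localization triangle is just $\Tate(4)[7]$ and there are only two boundary components to analyze instead of four), and it identifies the surviving boundary directly as a class in $CH_1(C)=CH^0(C)\cong\ZZ$, which is manifestly preserved under extension to a splitting field; your argument that $n=\pm 1$ via comparison with the split decomposition of $M(GL_2)$ reaches the same conclusion by the same mechanism.
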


\begin{proof}
Consider the distinguished triangle corresponding to the open embedding $GL_1(A) \subset \Aff^4 - \{ 0 \}$:
$$
M(C)(1)[1] \oplus M(C)(3)[4] \to \widetilde{M}(GL_1(A)) \to \Tate(4)[7] \to M(C)(1)[2] \oplus M(C)(3)[5].
$$

By dimension reasons $Hom(\Tate(4)[7], M(C)(1)[2]) = 0$, therefore
$$\widetilde{M}(GL_1(A)) = M(C)(1)[1] \oplus cone(\Tate(4)[7] \to M(C)(3)[5])[-1].$$ 
The morphsim $\Tate(4)[7] \to M(C)(3)[5]$ corresponds to a class in $CH_1(C) = CH^0(C)$ which
can be computed after passing to a splitting field.
In the split case the morphism in question is the canonical morphism (the one
which corresponds to a cycle of degree 1), hence the same holds over $F$. 

\end{proof}

\begin{remark}
Note that in the split case $C = \Proj^1$ and $\Tate_{a,b} = \Tate$
so that the we have
$$
M(GL_2(F)) = \Tate \oplus \Tate(1)[1] \oplus \Tate(2)[3] \oplus \Tate(3)[4]
$$
in agreement with Proposition \ref{MotiveGLn}.
\end{remark}

\subsection{The general case}

We assume $n \ge 3$ is a prime.
Let $Z$ be the complement of $GL_1(A)$ in $\Affn$,
i.e. the subvariety in $\Affn$ given by equation $Nrd_A=0$.
Let $M = M_Z(\Affn)[-1]$, so that there is a distinguished triangle
$$
M \to M(GL_1(A)) \to M(\Affn) \to M[1].
$$

\begin{theorem}
\label{MotiveM}
1. For $j < n^2$ and $p \in \ZZ$ we have 
a canonical isomorphism
$$\widetilde{H}^{p,j}(GL_1(A)) \to H^{p,j}(M).$$  

2. If $A$ splits, then we have a decomposition
$$
M = \widetilde{M}(GL_1(A)) \oplus \Tate(n^2)[2n^2-2] =
\bigoplus_{I \ne \emptyset} \Tate_\XX(|I|)[2|I|-l(I)] \oplus
\Tate(n^2)[2n^2-2].
$$

3. $M$ is an object in $DT(\XX)$ and
the slices of the slice filtration are given by:
$$
\Piece{q}(M) = \left\{
\begin{array}{lll}

\bigoplus_{|I|=q} \Tate_\XX(q)[2q-l(I)] , & 1 \le q \le 
\frac{n(n+1)}{2}\\

\Tate_\XX(n^2)[2n^2-2], & q = n^2 \\

0, & otherwise
\end{array}
\right.
$$

\end{theorem}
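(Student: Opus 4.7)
The plan is to handle the three parts in order, leveraging the long exact sequence from the defining triangle $M \to M(GL_1(A)) \to M(\Affn) \to M[1]$ together with Lemmas \ref{XXInvar} and \ref{TwistedTate} to transfer information from the split case to the general case. First I would compute $\widetilde{M}(\Affn) \cong \Tate(n^2)[2n^2-1]$ from the Gysin triangle for $\{0\} \hookrightarrow \Aff^{n^2}$. Since the canonical augmentations to $\Tate$ are compatible with the open embedding $GL_1(A) \hookrightarrow \Affn$, the defining triangle descends to a reduced version $M \to \widetilde{M}(GL_1(A)) \to \widetilde{M}(\Affn) \to M[1]$, and Part 1 follows at once from the resulting long exact sequence, since $H^{p,j}(\widetilde{M}(\Affn)) = H^{p-2n^2+1,\, j-n^2}(F) = 0$ whenever $j < n^2$.

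For Part 2, under the split assumption Proposition \ref{MotiveGLn} gives $\widetilde{M}(GL_n(F)) = \bigoplus_{I \ne \emptyset} \Tate(|I|)[2|I|-l(I)]$. The main claim is that the middle map $\widetilde{M}(GL_n(F)) \to \Tate(n^2)[2n^2-1]$ in the reduced triangle is zero: summand-by-summand this amounts to the vanishing of $H^{2n^2-1-(2|I|-l(I)),\, n^2-|I|}(F)$, which follows from the bound $|I| \leq n(n+1)/2 < n^2$ together with the fact that motivic cohomology of a field vanishes in cohomological degree strictly greater than the weight. Once the middle map is zero, the cone of the zero map $\widetilde{M}(GL_n(F)) \to \Tate(n^2)[2n^2-1]$ is the direct sum $\Tate(n^2)[2n^2-1] \oplus \widetilde{M}(GL_n(F))[1]$, identifying $M$ with $\widetilde{M}(GL_n(F)) \oplus \Tate(n^2)[2n^2-2]$.

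For Part 3, I would first verify $M \in \DMeffXX$ via Lemma \ref{XXInvar}(2) applied to the closed embedding $Z \hookrightarrow \Aff^{n^2}$. At any scheme-theoretic point $z \in Z$ the corresponding element $g_z \in A_{F(z)}$ is nonzero (since the origin has been removed from $\Affn$) and satisfies $Nrd(g_z) = 0$, so it is a non-invertible nonzero element. By Wedderburn, $A_{F(z)} \cong M_k(D)$ with $kd = n$; since $n$ is prime, either $k = 1$ (impossible, as a division algebra has no non-invertible nonzero elements) or $d = 1$, so $A_{F(z)}$ splits. Then Lemma \ref{TwistedTate} applies: extending scalars to $F(SB(A))$ trivializes $A$, Part 2 identifies $M_{F(SB(A))}$ explicitly as a split Tate motive, and reading off the multiplicities of each $\Tate(p)[q]$ yields precisely the stated slice formula.

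The most delicate step is probably the zero-divisor argument in Part 3, where the primality of $n$ is used essentially to eliminate intermediate factorizations $A_{F(z)} \cong M_k(D)$ with $1 < k < n$. The cohomological vanishings in Parts 1 and 2 are routine degree checks once $\widetilde{M}(\Affn)$ has been computed, and the splitting of the triangle in Part 2 is then automatic from the cone-of-zero-map description.
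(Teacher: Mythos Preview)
Your proof is correct and follows essentially the same approach as the paper's: the long exact sequence with the vanishing of $\widetilde{H}^{*,j}(\Affn)$ for $j<n^2$ in Part~1, the vanishing of $Hom(\widetilde{M}(GL_n(F)),\widetilde{M}(\Affn))$ via Proposition~\ref{MotiveGLn} to split the triangle in Part~2, and the zero-divisor argument at points of $Z$ combined with Lemmas~\ref{XXInvar}(2) and~\ref{TwistedTate} in Part~3. The only slip is notational: in Part~3 the closed embedding to which Lemma~\ref{XXInvar}(2) applies is $Z \hookrightarrow \Affn$, not $Z \hookrightarrow \Aff^{n^2}$, though your parenthetical about the origin shows you have the right picture.
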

\begin{proof}
Motivic cohomology of $GL_1(A)$ and that of $M$ are
related via the long exact sequence
$$
\widetilde{H}^{p,j}(\Affn) \to \widetilde{H}^{p,j}(GL_1(A)) \to H^{p,j}(M') \to \widetilde{H}^{p+1,j}(\Affn), 
$$
and the first claim follows since 
$$\widetilde{H}^{p,j}(\Affn) = H^{p,j}(\Tate(n^2)[2n^2-1]) = 0$$
for $j < n^2$ and any $p \in \ZZ$.

If the algebra $A$ is split, then in the distinguished triangle
$$
M \to \widetilde{M}(GL_n(F)) \to \widetilde{M}(\Affn) \to M[1]
$$
the second morphism is zero,
since as a simple computation using Proposition \ref{MotiveGLn}
shows, $Hom(\widetilde{M}(GL_n(F)), \widetilde{M}(\Affn))=0$.
The triangle splits yielding the first equality in the second claim.
The second equality follows from Proposition \ref{MotiveGLn}.

To prove the third claim note that any point of $z \in Z$ splits
$A$: $A_{F(z)}$ has a non-zero non-invertible element (given by $z$) 
therefore
$A_{F(z)}$ is not a division algebra, and since we assume that
the degree $n$ of $A$ is prime, $A_{F(z)}$ splits.
The third claim now follows from Lemmas \ref{XXInvar}, 2 
and \ref{TwistedTate}. 

\end{proof}

The slice filtration gives rise to a spectral sequence for each weight $j$
$$
E_2^{p,q} = H^{p+q}(\Piece{q}(M), \Tate(j)) \Rightarrow H^{p+q}(M, \Tate(j)).
$$

If we consider the weights $j < n^2$, then
by Theorem \ref{MotiveM}, 3
the spectral sequence in question actually converges to $\widetilde{H}^{*,j}(GL_1(A))$.

In the computation of the second differential in the spectral sequence
we will need the following isomorphism proved in \cite{MS}, Proposition 1.3:
$$
H^{3,1}(\Tate_\XX) \cong Ker(res: H^2_{et}(F, \mu_n) \to H^2_{et}(F(SB(A)), \mu_n)). 
$$
On the other hand for any field $H^2_{et}(F, \mu_l)$ is canonically isomorphic
to the $n$-torsion of the Brauer group $Br(E)$, and the kernel of the restriction
map $Br(F) \to Br(F(X))$ is generated by the class of algebra $A$ by the classical Amitsur theorem.
Since the period of $A$ is equal to $n$ we 
have a canonical isomorphism
$$
H^{3,1}(\Tate_\XX) \cong \ZZ/n.
$$

\begin{theorem}
\label{Differential}
Let $1 \le q \le \frac{n(n+1)}{2}$. The second differential $d_2$ in
the slice spectral sequence is induced by the morphism of motives
$$
\partial_q: \Piece{q}(M) = \bigoplus_{|I|=q} \Tate_\XX(q)[2q-l(I)] \to
\Piece{q+1}(M)[1] = \bigoplus_{|J|=q+1} \Tate_\XX(q+1)[2q+3-l(J)].
$$

The morphism 
$$\partial_{I,J}: \Tate_\XX(q)[2q-l(I)] \to \Tate_\XX(q+1)[2q+3-l(J)]$$
corresponding to multi-indices $I, |I|=q$ and $J, |J|=q+1$
is zero unless $l(I) = l(J)$ and the sequence $J$ is obtained
from the sequence $I$ by increasing one index $i_t$ by one, in
which case $\partial_{I,J}$ corresponds to the class in $H^{3,1}(\Tate_\XX)$
equal to $i_t \cdot c \cdot [A]$, for some integer $c$ coprime to $n$
depending only on $A$.
\end{theorem}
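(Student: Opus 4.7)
The plan is to compute the matrix coefficients $\partial_{I,J}$ via the identification $SB(A) \times GL_1(A) \cong \GL_{SB(A)}(\JJ^*)$ of Proposition \ref{SplitGL1}, the Chern class product formula from the top of Section 4, and a descent from $SB(A)$ to $\XX$ through the Brauer class. First, by the cancellation theorem and Remark \ref{CohomAdj}, the component $\partial_{I,J}$ lives in $H^{l(I)-l(J)+3,1}(\Tate_\XX)$. The main case is $l(I)=l(J)$, where $H^{3,1}(\Tate_\XX) \cong \ZZ/n \cdot [A]$ by the identification quoted just before the theorem; the cases $l(I)\ne l(J)$ will be handled by a separate vanishing argument combining the degeneration of the spectral sequence over a splitting field (where $M$ is split Tate by Theorem \ref{MotiveM}, 2) with the observation that only weight-$3$ motivic cohomology of $\Tate_\XX$ carries the obstruction class $[A]$.

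For $l(I)=l(J)$, I would apply Proposition \ref{MotiveGLE} to $\GL_{SB(A)}(\JJ^*)$: the motive $M(SB(A)) \otimes M(GL_1(A))$ decomposes with projections given by the Chern classes $c_J(\alpha_{\JJ^*})$. By Proposition \ref{SplitGL1}, on a splitting cover $[\alpha_{\JJ^*}] = [p_1^*\OO(1)]\cdot[p_2^*\alpha_0]$, and the product formula gives
$$c_k(\alpha_{\JJ^*}) \;=\; c_k(\alpha_0) \;-\; (k-1)\lambda\, c_{k-1}(\alpha_0) \;+\; O(\lambda^2),$$
with $\lambda = c_1(\OO(1))$. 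Expanding $c_J(\alpha_{\JJ^*}) = \prod_t c_{j_t}(\alpha_{\JJ^*})$ and collecting by total $\lambda$-degree, the $\lambda^1$ contribution is $-\sum_t (j_t - 1)\,\lambda\, c_{J - e_t}(\alpha_0)$, where $J - e_t$ denotes $J$ with entry $j_t$ replaced by $j_t - 1$. Relabeling with $I = J - e_t$ so that $J = I + e_t$ and $j_t - 1 = i_t$, this says: the cross-term linking the $|I|=q$ summand to the $|J|=q+1$ summand has coefficient $-i_t$. The combinatorial condition $J = I + e_t$ enforces precisely $|J|=|I|+1$ together with $l(J)=l(I)$, matching the theorem. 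Higher corrections $\lambda^s$ with $s\ge 2$ contribute to $d_{2s}$ and therefore do not enter the formula for $\partial_q$.

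To descend this Chern class computation on $SB(A)$ to the slice spectral sequence of $M$ in $DT(\XX)$, I would use the Rost-type slice triangle for $M(SB(A))$, whose connecting morphism $\Tate_\XX \to \Tate_\XX(1)[3]$ equals $c \cdot [A]$ for a universal nonzero $c \in \ZZ/n$ depending only on $A$ (illustrated for quaternions by the example following Lemma \ref{TwistedTate}, and in general arising from the identification $H^{3,1}(\Tate_\XX) \cong \ZZ/n \cdot [A]$ of \cite{MS} and the Amitsur theorem quoted just before the statement). Via octahedral manipulation assembling the slice filtration of $M$ out of those of the Tate summands of $M(SB(A)) \otimes M$, the $\lambda^1$ correction in the expansion of $c_J(\alpha_{\JJ^*})$ gets transported to the $(I,J)$-component of $\partial_q$, producing the asserted value $i_t \cdot c \cdot [A]$ after absorbing the sign into $c$. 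The main obstacle will be this last step: tracking through the octahedral diagrams carefully so that the $\lambda^1$ correction is captured by $d_2$ (rather than a higher differential or some unrelated boundary) and that the constant $c$ is well-defined modulo $n$ independently of the slice index. The remaining ingredients — the Chern class product formula, Proposition \ref{MotiveGLE}, and the identification of $H^{3,1}(\Tate_\XX)$ — are already in place from the excerpt.
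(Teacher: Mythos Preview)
Your proposal assembles the right ingredients—Proposition \ref{SplitGL1}, Proposition \ref{MotiveGLE}, the Chern class product formula, and the identification of the boundary $\bar\partial \in H^{3,1}(\Tate_\XX)$ with $c\cdot[A]$—and the $\lambda^1$ expansion of $c_J(\alpha_{\JJ^*})$ is exactly what drives the combinatorics. But the step you flag as ``the main obstacle'' is in fact a genuine gap, and your sketch of it (``octahedral manipulation assembling the slice filtration of $M$ out of those of the Tate summands of $M(SB(A)) \otimes M$'') is not a workable plan. The slice differential $\partial_q$ lives on $M$, not on $M(SB(A))\otimes M$, and there is no mechanism in your outline that pins down which piece of the $\lambda$-expansion lands in $\partial_q$ rather than in some other boundary. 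Likewise, your proposed treatment of $l(I)\ne l(J)$ via degeneration over a splitting field cannot work: over the splitting field \emph{every} component of $\partial_q$ vanishes (since $M$ becomes split Tate), so this carries no information back to $F$. And note that the theorem also asserts vanishing when $l(I)=l(J)$ but $J\ne I+e_t$; here $\partial_{I,J}$ lives in $H^{3,1}(\Tate_\XX)\cong\ZZ/n$, so vanishing is not automatic from the target group.

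The paper closes this gap not with octahedra but by constructing, for each fixed $I$ with $|I|=q$ and $l(I)=r$, an actual morphism
\[
\phi\colon M(SB(A))(q)[2q-r]\longrightarrow M,
\]
obtained by first mapping the $c_I(\alpha_{\JJ^*})$-summand of $M(\GL_{SB(A)}(\JJ^*))=M(SB(A)\times GL_1(A))$ into $M(GL_1(A))$ and then checking, via a direct Hom computation against $M(\Affn)$, that this factors uniquely through $M$. Functoriality of the slice filtration then yields a commuting square with the known boundary $\bar\partial$ for $M(SB(A))$ on top and $\partial_q$ on the bottom. The whole argument reduces to a lemma computing $\Piece{q}(\phi)$ and $\Piece{q+1}(\phi)$: the first is the canonical inclusion of the $I$-summand, and the second has $J$-component equal to $i_t$ when $J=I+e_t$ and zero otherwise. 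Since these components sit in $H^{r-l(J),0}(\Tate_\XX)$, Lemma \ref{CohomXX} lets one compute them after base change to $F(SB(A))$, where it becomes exactly your Chern class expansion (the paper expands $c_J(p_2^*\alpha_0)$ in the $c_I(\alpha_{\JJ^*})$-basis, dual to your direction). The formula $\partial_{I,J}=\Piece{q+1}(\phi)_J\circ\bar\partial$ then gives all cases at once, including the vanishing ones. What your outline is missing is precisely this bridge morphism $\phi$ and the verification that $\Piece{q}(\phi)$ is the inclusion.
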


\begin{proof}
We fix a weight $q$ and a multi-index $I$ such that $|I|=q$,
and let $r = l(I)$.
Consider the motive $M(SB(A) \times GL_1(A))$. According to Proposition \ref{SplitGL1} 
$$SB(A) \times GL_1(A) = \GL_{SB(A)}(J^*),$$ and
Proposition \ref{MotiveGLE} implies that $M(SB(A) \times GL_1(A))$
admits a direct summand $M(SB(A))(q)[2q-r]$ corresponding to
the class $c_I(\alpha_E)$.
We claim that the composition $\psi$ defined as
$$
M(SB(A))(q)[2q-r] \to M(\GL_{SB(A)}(J^*)) = 
$$
$$
M(GL_1(A) \times SB(A)) \to M(GL_1(A))
$$
factors uniquely through $M \to M(GL_1(A))$. Indeed, from the distinguished
triangle defining $M$ we see that it is sufficient to show that
$$Hom(M(SB(A))(q)[2q-r],M(\Affn)[\epsilon])=0,$$
for $\epsilon=0,-1$.

$$Hom(M(SB(A))(q)[2q-r],M(\Affn)[\epsilon]) =$$
$$Hom(M(SB(A))(q)[2q-r],\Tate[\epsilon] \oplus \Tate(n^2)[2n^2-1+\epsilon]) =$$
$$H^{2n^2-1+\epsilon-(2q-r),n^2-q}(SB(A)) = 0,$$
since
$$2n^2-1+\epsilon-(2q-r)-(n^2-q)=n^2-q+r-1+\epsilon
> dim(SB(A))=n-1,$$
where we assume that $n \ge 3$ and can also assume that $q < \frac{n(n+1)}{2}$
since otherwise the statement of the theorem is trivial.

The morphism $$\phi: M(SB(A))(q)[2q-r] \rightarrow M$$ 
that we have just defined induces a morphism on the slice filtrations
of the source and target motives.
By Lemma \ref{TwistedTate} we have
$$
\Piece{k}(M(SB(A))(q)[2q-r]) = \Tate_\XX(k)[2k-r]
$$
for $q \le k \le n-1+q$.

On the other hand by Theorem \ref{MotiveM} we have
$$
\Piece{k}(M) = \bigoplus_{|J|=k} \Tate_\XX(k)[2k-l(J)].
$$

For each $J$ with $|J|=k$ we let $\Piece{k}(\phi) = \oplus \Piece{k}(\phi)_J$, 
$$ 
\Piece{k}(\phi)_J: \Tate_\XX(k)[2k-r] \to \Tate_\XX(k)[2k-l(J)].
$$ 

Each $\Piece{k}(\phi)_J$ is an element in 
$$Hom(\Tate_\XX(k)[2k-r]\, \Tate_\XX(k)[2k-l(J)]) =$$ 
$$Hom(\Tate_\XX, \Tate_\XX[r-l(J)]) = H^{r-l(J),0}(\Tate_\XX).$$
By Lemma \ref{CohomXX} the latter group is isomorphic to $\ZZ$
when $l(J)=r$ and is zero otherwise.

For the computation of the differential in the spectral sequence
we restrict our attention to the cases $k = q$, $k=q+1$.

\begin{lemma}
1. $\Piece{q}(\phi)_J = 1$ for $J = I$ and is zero if $J \ne I$.

2. $\Piece{q+1}(\phi)_J = i_t$ if the sequence $J$ is obtained
from $I$ by increasing an index $i_t$ by one, and is zero otherwise.  
\end{lemma}

Using the Lemma we finish the proof of the theorem. We have the
commutative diagram of the connecting morphisms in the slice filtrations: 

\[
\xymatrix{
\Tate_\XX(q)[2q-r] \ar[r]_-{\overline\partial} \ar[d]_{\Piece{q}(\phi)} & \Tate_\XX(q+1)[2q+3-r] \ar[d]_{\Piece{q+1}(\phi)} \\
\bigoplus_{|J|=q} \Tate_\XX(q)[2q-l(J)] \ar[r]_-{\partial_q} & \bigoplus_{|J|=q+1} \Tate_\XX(q+1)[2q+3-l(J)]
}
\]

From the first part of the Lemma it follows that the left
vertical map is the canonical embedding corresponding to $J = I$.
A diagram chase shows that
$$
\partial_{I,J} = \Piece{q+1}(\phi)_J \circ \overline\partial.
$$

It is proved by Suslin in \cite{Sus} that $\overline \partial \in H^{3,1}(\Tate_\XX)$
is equal to $c [A]$, for $c$ coprime to $n$. 
By the second part of the Lemma, we obtain the description of
the differential.  

\medskip

It remains to prove the Lemma above.
According to Lemma \ref{CohomXX}, integers
$\Piece{q}(\phi)_J$ and $\Piece{q+1}(\phi)_J$
do not change under the extension of scalars to the field $F(SB(A))$.
Therefore we may assume that $A$ is split.

In this case for any $1 \le q \le \frac{n(n+1)}{2}$ we have 
$\Piece{q}(M) = \Piece{q}(GL_n(F))$ by Theorem \ref{MotiveM}, 2
and thus $\Piece{q}(\phi)$ is identified with $\Piece{q}(\psi)$.

The morphism $\psi$ has the form:
$$
\psi: M(\Proj(V))(q)[2q-r] \to M(\GL_{\Proj(V)}(J^*)) = M(\Proj(V) \times GL_n(F)) \to M(GL_n(F)).
$$

For each $q \le k \le q+n-1$ we have
$$\Piece{k}(\psi): \Tate(k)[2k-r] \to \bigoplus_{|J|=k} \Tate(k)[2k-l(J)].$$ 
$\Piece{k}(\psi)_J$ can be non-zero only for $J$ with $l(J) = r$,
and for such $J$, we have
$$
\Piece{k}(\psi)_J = \psi^*(c_J(\alpha_0)) \in CH^{k-q}(\Proj(V)).
$$
$\psi^*$ above is the morphism induced on motivic cohomology:
$$
\psi^*: H^{*,*}(GL_n(F)) \to H^{*-(2q-r)),*-q}(\Proj(V)).
$$

By Proposition \ref{MotiveGLE} motivic cohomology $H^{*,*}(\GL_{\Proj(V)}(J^*))$ is a free module over $H^{*,*}(\Proj(V))$
with a basis $c_J(\alpha_{J^*})$.
We also have another basis $c_J(p_2^* \alpha_0)$,
and the map $\psi^*$ above can be described in terms of these bases
as follows: $\psi^*(c_J(\alpha_0))$ is the coefficient of 
$\lambda^{|J|-q} c_I(\alpha_{J^*})$
in $c_J(p_2^* \alpha_0)$, where $\lambda = [\OO(1)] \in CH^1(\Proj(V))$.

By Proposition \ref{SplitGL1} 
$[p_2^*(\alpha_0)] = [p_1^*(\OO(-1))] \cdot [\alpha]$
and by the properties of the higher Chern classes listed before Proposition
\ref{MotiveGLn} we compute:

$$
c_{j_1, \dots, j_r} (p_2^*(\alpha_0)) = \prod_{t=1}^r c_{j_t}(p_2^*(\alpha_0)) =
$$
$$
\prod_{t=1}^r (c_{j_t}(\alpha) + (j_t-1)\lambda \, c_{j_t-1}(\alpha) + \dots) =
$$
$$ 
c_{j_1, \dots, j_r}(\alpha) + \sum_{t=1}^r (j_t-1)\lambda \, c_{j_1,\dots,j_t-1, \dots, j_r} (\alpha) + \dots 
$$

Taking $|J|=q$, $|J|=q+1$ in the formula above finishes the proof.

\end{proof}

\begin{corollary} 
\label{SmallWeights}
The extension of scalars to a splitting field of $A$ 
identifies the weight 1 and 2 motivic cohomology of $GL_1(A)$ with:
$$
\widetilde{H}^{p,1}(GL_1(A)) = \left\{
\begin{array}{lll}
\ZZ         , & p=1 \\
0           , & otherwise
\end{array}
\right.
$$

$$
\widetilde{H}^{p,2}(GL_1(A)) = \left\{
\begin{array}{lll}
F^*         , & p=2 \\
n \ZZ        , & p=3 \\
0           , & otherwise
\end{array}
\right.
$$

$$
\widetilde{H}^{p,3}(GL_1(A)) = \left\{
\begin{array}{lll}
H^{0,2}(F)   , & p=1 \\
H^{1,2}(F)   , & p=2 \\
H^{2,2}(F)   , & p=3 \\
\ZZ \oplus (F^*)^n ,  & p = 4 \\
n \ZZ        , & p = 5 \\
0, & otherwise
\end{array}
\right.
$$


\end{corollary}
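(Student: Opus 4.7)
My plan is to feed the slices of Theorem \ref{MotiveM}(3) into the slice spectral sequence from the end of Section~3 and extract each of the three weights $j = 1, 2, 3$ separately, using Theorem \ref{MotiveM}(1) to identify $\widetilde{H}^{p,j}(GL_1(A)) = H^{p,j}(M)$ (which applies since all relevant weights satisfy $j < n^2$).

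Unpacking the slices gives
$$E_2^{p,q} = \bigoplus_{|I|=q} H^{p - q + l(I),\, j - q}(\Tate_\XX) \ \Rightarrow\ H^{p + q}(M, \Tate(j)).$$
Only $1 \le q \le j$ can contribute: the weight $j - q < 0$ kills higher $q$, and the exceptional $q = n^2$ slice is in weight $n^2 > 3$. The relevant multi-indices are $I = \{1\}$ for $q = 1$, $I = \{2\}$ for $q = 2$, and $I \in \{\{3\}, \{1,2\}\}$ for $q = 3$, so the $E_2$ page is assembled from low-degree motivic cohomology of $\Tate_\XX$ in weights $0, 1, 2$.

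The required inputs are: weight $0$ is $H^{*, 0}(F)$ by Lemma \ref{CohomXX}; in weight $1$ the groups below degree $3$ coincide with those of $F$, with $H^{3, 1}(\Tate_\XX) = \ZZ/n \cdot [A]$ by the cited consequence of \cite{MS} and Amitsur; in weight $2$ the low-degree groups agree with $H^{*, 2}(F)$ through degree $3$, with the ``defect'' first surfacing at degree $4$. The $d_2$ differentials are then cup products with $i_t \cdot c \cdot [A] \in H^{3, 1}(\Tate_\XX)$ by Theorem \ref{Differential}, where $\gcd(c, n) = 1$; for $n \ge 3$ prime and $i_t$ coprime to $n$, each such differential fits into an exact sequence $0 \to n\ZZ \to \ZZ \to \ZZ/n \to 0$ whenever source and target are $\ZZ$ and $\ZZ/n$. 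For $j = 2$ the only nontrivial differential is $E_2^{1, 2} \to E_2^{3, 1}$ from $(I, J) = (\{1\}, \{2\})$, surjective with kernel $n\ZZ$; combined with $E_2^{1, 1} = H^{1, 1}(\Tate_\XX) = F^*$ (on which no differentials act) this yields the weight $2$ claim.

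For $j = 3$ the same formalism produces a $d_2: E_2^{2, 3} = \ZZ \to E_2^{4, 2} = H^{3, 1}(\Tate_\XX) = \ZZ/n$ (via $(\{2\}, \{3\})$, multiplier $2c$) and a $d_2: E_2^{2, 2} = F^* \to E_2^{4, 1} = H^{4, 2}(\Tate_\XX)$ (via $(\{1\}, \{2\})$, multiplier $c$); the latter, cup product with $c[A]$, has kernel identified with $(F^*)^n$ by Merkurjev--Suslin, cutting $F^*$ down to the Kummer quotient. The main obstacle I anticipate is the extension problem at $p = 4$, weight $3$: the $E_\infty$ layer $\ZZ$ (from $q = 3$) and the Kummer quotient (from $q = 2$) must be shown to split as a direct sum rather than sit in a nontrivial extension. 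I would address this by producing an explicit motivic splitting using Pushin's higher Chern classes lifted via Proposition \ref{SplitGL1}, in the spirit of Proposition \ref{MotiveGLn}.
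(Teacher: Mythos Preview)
Your approach is essentially identical to the paper's: both feed the slices from Theorem~\ref{MotiveM} into the slice spectral sequence, invoke the same low-weight computations of $H^{*,*}(\XX)$ (weight $0$ from Lemma~\ref{CohomXX}, weights $1$ and $2$ from \cite{MS}), and identify the same two nonzero $d_2$ differentials in each weight via Theorem~\ref{Differential}. The paper likewise does not resolve the extension problem you flag at $(p,j)=(4,3)$; it simply reads the direct sum off the $E_\infty$ page, so your proposed extra splitting step goes slightly beyond what the paper actually does.
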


\begin{proof}
In weight $j$ the spectral sequence has nonzero terms 
$$E_2^{p,q}=H^{p+q}(\Piece{q}(M), \Tate(j))$$ 
only for $0 < q \le j$.
Let us now consider the weights $j=1,2,3$. In these weights the spectral
sequence converges to $\widetilde{H}^{*,j}(GL_1(A)$ by theorem \ref{MotiveM}, 1.   
The first three slices of the slice filtration are given by:
\begin{align*}
\Piece{1}(M) = & \Tate_\XX(1)[1] \\
\Piece{2}(M) = & \Tate_\XX(2)[3] \\
\Piece{3}(M) = & \Tate_\XX(3)[4] \oplus \Tate_\XX(3)[5].
\end{align*}

We need the following result on motivic cohomology of $\XX$:
$$
H^{p,0}(\XX) = \left\{
\begin{array}{lll}
\Tate       , & p=0 \\
0           , & otherwise
\end{array}
\right.
$$
which follows from Lemma \ref{CohomXX},
and

$$
H^{p,1}(\XX) = \left\{
\begin{array}{lll}
F^*         , & p=1 \\
\ZZ/n \cdot [A]   , & p=3 \\
0           , & otherwise
\end{array}
\right.
$$

$$
H^{p,2}(\XX) = \left\{
\begin{array}{lll}
H^{p,2}(F)  , & p \le 2 \\
K_1(F)/n \cdot [A] , & p=4 \\
0           , & otherwise
\end{array}
\right.
$$
These formulas follow easily from the results on
motivic cohomology of more general simplicial 
schemes $\XX_\theta$ proved in \cite{MS}.


In the weight $j=1$ the slice spectral sequence consists of one row which contains
a unique non-zero term $E_2^{0,1} = H^{0,0}(\XX) = \ZZ$, hence we get 
the isomorphism
$$
\widetilde{H}^{1,1}(GL_1(A)) = \Tate
$$ 
and the other reduced cohomology groups of $GL_1(A)$ of weight 1 vanish. 
 
In the weight $j=2$ we have two nonzero rows:
\begin{align*}
E_2^{p,1}= H^{p+1,2}(\XX(1)[1])= & \, H^{p,1}(\XX) \\
E_2^{p,2}= H^{p+2,2}(\XX(2)[3])= & \, H^{p-1,0}(\XX)
\end{align*}

\[
\xymatrix{
0 & \ZZ \ar[drr]_{d^2} & 0 & 0 \\
0 & F^* & 0 & \ZZ/n \\
0 & 0 & 0 & 0 
}
\]
and the differential $d_2$ is multiplication by $c$ which is coprime to $n$,
thus
$$
\widetilde{H}^{2,2}(GL_1(A)) = F^*
$$
$$
\widetilde{H}^{3,2}(GL_1(A)) = n \ZZ
$$
and the other reduced cohomology groups of $GL_1(A)$ of weight 2 vanish. 

In the weight $j=3$ we have three nonzero rows:
$$
E_2^{p,1}= H^{p+1,3}(\XX(1)[1])= H^{p,2}(\XX) 
$$
$$
E_2^{p,2}= H^{p+2,3}(\XX(2)[3])= H^{p-1,1}(\XX) 
$$
$$
E_2^{p,3}= H^{p+3,3}(\XX(3)[4]\oplus\XX(3)[5])= H^{p-1,0}(\XX) \oplus H^{p-2,0}(\XX).
$$

\[
\xymatrix{
0 & \ZZ & \ZZ \ar[drr]^{d^2}  & 0 & 0\\
0 & 0 & F^* \ar[drr]^{d^2} & 0 & H^{3,1}(\XX)\\
H^{0,2}(F) & H^{1,2}(F) & H^{2,2}(F) & 0 & H^{4,2}(\XX)\\
0 & 0 & 0 & 0 & 0
}
\]
Both non-zero differentials are surjective with kernels consisting
of elements divisible by $n$. There are no higher differentials by
dimension reasons.

\end{proof}

\section{The case of $SL_1(A)$}

In this section we investigate the motive of $SL_1(A)$ in the case of a division algebra
$A$ of degree 2 or 3 by looking at the smooth compactification of $SL_1(A)$. The compactification in
question arises as a hyperplane section of the generalized Severi-Brauer variety associated to $M_2(A)$.

\subsection{Hyperplane sections of the generalized Severi-Brauer varieties}

\medskip

Let $A$ be a division algebra of degree $n$ and
let $X_A$ be the closed subvariety
of $SB_n(M_2(A))$ given by equation
$$Nrd(\alpha_1) = Nrd(\alpha_2).$$
$X_A$ is a hyperplane section of $SB_n(M_2(A))$ with respect to
the Pl\"{u}cker embedding \ref{Plucker}.

On the open dense subscheme of $X_A$ where $Nrd(\alpha_1) \ne 0$ we have
$$[\alpha_1, \alpha_2] = [1, \alpha_1^{-1}\alpha_2] = [1, \alpha],$$
with the condition $Nrd(\alpha) = 1$. Hence this open subscheme is isomorphic to $SL_1(A)$.

\begin{proposition}
\label{Compactif}
$X$ is smooth if the degree $n = 2$ or $3$.
\end{proposition}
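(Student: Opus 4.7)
The plan is to reduce to the split case by faithfully flat descent (smoothness is stable under and descends along faithfully flat base change), so one may extend scalars to a splitting field of $A$. By Proposition \ref{SplitSB} the split form of $SB_n(M_2(A))$ is the Grassmannian $Gr(n, V^{\oplus 2}) = Gr(n, 2n)$, with $(\alpha_1, \alpha_2)$ now honest $n \times n$ matrices and $Nrd = \det$, so the defining equation of $X_A$ becomes $\det(\alpha_1) - \det(\alpha_2) = 0$.

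Next I would cover $Gr(n, 2n)$ by its standard affine opens $U_I$ indexed by $n$-element subsets $I \subset \{1, \ldots, 2n\}$: on $U_I$ a subspace is represented by the $2n \times n$ matrix $\begin{pmatrix}\alpha_1 \\ \alpha_2\end{pmatrix}$ whose submatrix on the rows from $I$ is the identity, the remaining $n^2$ entries being free affine coordinates. Writing $k = |I \cap \{1, \ldots, n\}|$ for the number of $\alpha_1$-rows in $I$, a Laplace expansion along the identity rows shows that on $U_I$ the function $\det(\alpha_1)$ equals $\pm 1$ times an explicit $(n-k) \times (n-k)$ minor in the free entries of $\alpha_1$, and $\det(\alpha_2)$ equals $\pm 1$ times a $k \times k$ minor in the free entries of $\alpha_2$. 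The crucial point is that these two polynomials involve completely disjoint subsets of the chart's coordinates.

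The Jacobian criterion then finishes the argument: the differential of $\det(\alpha_1) - \det(\alpha_2)$ can vanish at a point only where both $d\det(\alpha_1)$ and $d\det(\alpha_2)$ vanish, and the differential of an $m \times m$ determinant is the adjugate and so vanishes only on matrices of rank $\le m - 2$ (never if $m \le 1$). For the extreme charts $k = 0$ or $k = n$ the defining equation becomes $\det(M) = 1$ with $M \in M_n$, forcing $M$ invertible and hence the differential nonzero. For mixed charts $0 < k < n$ the nonvanishing reduces to the numerical condition that at least one of $k, n - k$ equals $1$; this is automatic precisely when $n \le 3$, in which case one of the minors is linear in a single variable and its differential is identically $\pm 1$.

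The main (and essentially only) obstacle is the sharpness of the hypothesis: for $n \ge 4$ one can choose a mixed chart with $2 \le k \le n - 2$, and then both minors have degree $\ge 2$ and both of their differentials vanish at the origin of $U_I$, producing a genuine singular point of $X_A$. So the restriction $n \in \{2, 3\}$ is not an artifact of the argument but of the geometry, and the book-keeping of which Laplace-minor appears in each chart is exactly what drives the proof.
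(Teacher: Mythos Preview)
Your proposal is correct and follows essentially the same approach as the paper: reduce to the split case, cover $Gr(n,2n)$ by the standard affine charts, and inspect the defining equation $\det(\alpha_1)=\det(\alpha_2)$ in each chart. Your presentation is a bit more systematic---you treat general $n$ via the Jacobian criterion and the observation that $d\det$ of an $m\times m$ block vanishes only in rank $\le m-2$---whereas the paper just writes down the representative mixed chart for $n=3$ and notes that the equation $a_{33}=a_{51}a_{62}-a_{52}a_{61}$ is visibly smooth; but the underlying argument is identical.
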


\begin{proof}
We concentrate on the case $n=3$, the case $n=2$ being similar.
It suffices to consider the case when $A$ splits.
In this case, according to \ref{Istar}, $SB_3(M_2(A))$ is isomorphic to $Gr(3, 6)$,
and $Nrd$ is identified with $det$.
We identify points of $Gr(3,6)$ with $6 \times 3$ matrices of rank 3 modulo $GL_3(F)$ action on the columns.
We cover $Gr(3,6)$ by open charts where the three given rows are linearly independent.

$X_A$ intersects the open chart where the first three (or the last three) rows are linearly independent,
in a variety isomorphic to $SL_3(F)$, which is obviously smooth.

All other cases will look essentially like this:

\[ \left( \begin{array}{ccc}
1 & 0 & 0\\
0 & 1 & 0\\
a_{31} & a_{32} & a_{33}\\
\hline
0 & 0 & 1\\
a_{51} & a_{52} & a_{53}\\
a_{61} & a_{62} & a_{63}\end{array} \right),\]

in which case the equation defining $X_A$ becomes
$$
a_{33} = a_{51}a_{62} - a_{52}a_{61},
$$
so that in these charts $X_A$ is also smooth.
\end{proof}

\begin{remark}
It is clear from the proof of the proposition that $X_A$ will not be smooth
if $n > 3$. Indeed, in this case the equation defining $X_A$ in some of the charts will
be polynomials that do not contain constant or linear terms in $a_{ij}$.
\end{remark}

\subsection{The case $n=2$}

Let $A = \quat$ be the quaternion algebra, and $X = X_A$.
As indicated in Remark \ref{Plucker} we can pick coordinates such that the Pl\"{u}cker embedding
$SB_2(M_2(A)) \to \Proj^5$
sends $[\alpha_1, \alpha_2]$ to
$$[Nrd(\alpha_1), Nrd(\alpha_2), Nrd(\alpha_1+\alpha_2), Nrd(\alpha_1+i\alpha_2),
Nrd(\alpha_1+j\alpha_2),
Nrd(\alpha_1+k\alpha_2)].$$

Note that $SB_2(M_2(A))$ is a hypersurface in $\Proj^5$.
We can change a coordinate system, so that the equation defining $SB_2(M_2(A))$ becomes
particularly simple.

\begin{proposition}
Let $$t_1 = Nrd(\alpha_1)$$
$$t_2 = Nrd(\alpha_2)$$ and
$$u_1 = \frac{1}{2}(Nrd(\alpha_1 + \alpha_2) - Nrd(\alpha_1) - Nrd(\alpha_2))$$
$$u_2 = -\frac{1}{2a}(Nrd(\alpha_1 + i\alpha_2) - Nrd(\alpha_1) - Nrd(i\alpha_2))$$
$$u_3 = -\frac{1}{2b}(Nrd(\alpha_1 + j\alpha_2) - Nrd(\alpha_1) - Nrd(j\alpha_2))$$
$$u_4 = \frac{1}{2ab}(Nrd(\alpha_1 + k\alpha_2) - Nrd(\alpha_1) - Nrd(k\alpha_2)).$$
Then $SB_2(M_2(A))$ is defined by the equation
$$
t_1 t_2 = u_1^2 - a u_2^2 - b u_3^2 + ab u_4^2.
$$
\end{proposition}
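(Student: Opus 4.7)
The plan is to recognize $u_1 + u_2 i + u_3 j + u_4 k$ as the quaternion $\alpha_1 \bar\alpha_2$, so that the claimed equation becomes multiplicativity of the reduced norm. First, I recall that $Nrd$ is a quadratic form on $A$ with associated symmetric bilinear form
\[
\langle x,y\rangle := Nrd(x+y) - Nrd(x) - Nrd(y) = Trd(x\bar y),
\]
obtained by expanding $(x+y)\overline{(x+y)}$ using the anti-involution $\overline{xy}=\bar y\bar x$. Applying this polarization to the four defining expressions, together with $\overline{\beta\alpha_2}=\bar\alpha_2\bar\beta$ and $\bar i=-i$, $\bar j=-j$, $\bar k=-k$, rewrites the coordinates as
\[
u_1 = \tfrac12 Trd(\alpha_1\bar\alpha_2),\ \ u_2 = \tfrac1{2a} Trd(\alpha_1\bar\alpha_2\, i),\ \ u_3 = \tfrac1{2b} Trd(\alpha_1\bar\alpha_2\, j),\ \ u_4 = -\tfrac1{2ab} Trd(\alpha_1\bar\alpha_2\, k).
\]

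Next, a direct check against the multiplication table ($i^2=a$, $j^2=b$, $k^2=-ab$, and the anticommutation relations) shows that the components of any quaternion $q=x+yi+zj+wk$ are recovered by exactly the formulas $x=\tfrac12 Trd(q)$, $y=\tfrac1{2a}Trd(qi)$, $z=\tfrac1{2b}Trd(qj)$, $w=-\tfrac1{2ab}Trd(qk)$. Applied to $q=\alpha_1\bar\alpha_2$ this gives the key identity
\[
\alpha_1\bar\alpha_2 \;=\; u_1 + u_2 i + u_3 j + u_4 k.
\]
Taking $Nrd$ of both sides and using its multiplicativity together with $Nrd(\bar\alpha_2)=Nrd(\alpha_2)$ yields
\[
u_1^2 - a u_2^2 - b u_3^2 + ab u_4^2 \;=\; Nrd(\alpha_1)\,Nrd(\alpha_2) \;=\; t_1 t_2,
\]
so the image of $SB_2(M_2(A))$ under the twisted Pl\"ucker embedding lies inside the quadric $Q\subset\Proj^5$ defined by this equation.

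To upgrade this containment to equality I would argue by dimension and irreducibility. Over a splitting field $SB_2(M_2(A))$ becomes the Klein quadric $Gr(2,4)\subset\Proj^5$, which is an irreducible hypersurface of degree $2$; hence $SB_2(M_2(A))$ is geometrically irreducible of dimension $4$ in $\Proj^5$. The quadratic form $-t_1 t_2 + u_1^2 - au_2^2 - bu_3^2 + abu_4^2$ is the orthogonal sum of a hyperbolic plane and the non-degenerate quaternionic norm form $\langle 1,-a,-b,ab\rangle$, so it has rank $6$ and $Q$ is geometrically irreducible of dimension $4$. An irreducible closed subscheme of an irreducible hypersurface of the same dimension coincides with it, so $SB_2(M_2(A)) = Q$.

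The argument is essentially formal once one spots the quaternion $\alpha_1\bar\alpha_2$ hiding in the definition of the $u_i$; the only delicate point is tracking the signs coming from the involution $\bar\beta=-\beta$ for $\beta\in\{i,j,k\}$ (and from $Nrd(\beta)$ being $-a$, $-b$, or $ab$), which is precisely what forces the prefactors $\tfrac1{2a}$, $\tfrac1{2b}$, $-\tfrac1{2ab}$ in the definition of the new coordinates to align with the trace-extraction formulas.
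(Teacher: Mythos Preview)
Your proof is correct and follows essentially the same route as the paper: both arguments use the polarization identity $Nrd(\alpha+\beta)-Nrd(\alpha)-Nrd(\beta)=Trd(\alpha\bar\beta)$ to rewrite the $u_i$ as trace coefficients, identify $\alpha_1\bar\alpha_2=u_1+u_2 i+u_3 j+u_4 k$, apply multiplicativity of $Nrd$ to obtain the quadric equation, and then conclude by comparing dimensions and irreducibility. Your writeup is in fact slightly more detailed on the final step (identifying the quadratic form and its rank).
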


\begin{proof}
First note that the identity
$$
Nrd(\alpha+\beta) = Nrd(\alpha) + Nrd(\beta) + Tr(\alpha \overline \beta).
$$
implies that
$$u_1 = \frac{1}{2}Tr(\alpha_1 \overline \alpha_2)$$
$$u_2 = -\frac{1}{2a}Tr(\alpha_1 \overline{i \alpha_2})= \frac{1}{2a}Tr(\alpha_1 \overline \alpha_2 i)$$
$$u_3 = -\frac{1}{2b}Tr(\alpha_1 \overline{j \alpha_2})= \frac{1}{2b}Tr(\alpha_1 \overline \alpha_2 j)$$
$$u_4 = \frac{1}{2ab}Tr(\alpha_1 \overline{k \alpha_2})= -\frac{1}{2ab}Tr(\alpha_1 \overline \alpha_2 k).$$

Now,
$$
\alpha_1 \overline\alpha_2 =
\frac{1}{2}Tr(\alpha_1 \overline\alpha_2) +
\frac{1}{2a}Tr(\alpha_1 \overline\alpha_2 i)i +
\frac{1}{2b}Tr(\alpha_1 \overline\alpha_2 j)j -
\frac{1}{2ab}Tr(\alpha_1 \overline\alpha_2 k)k =
$$
$$
u_1 + u_2 i + u_3 j + u_4 k,
$$
so that
$$
t_1 t_2 = Nrd(\alpha_1) Nrd(\alpha_2) = Nrd(\alpha_1) Nrd(\overline \alpha_2) =
$$
$$
Nrd(\alpha_1 \overline \alpha_2) = Nrd(u_1 + u_2 i + u_3 j + u_4 k) = u_1^2 - a u_2^2 - b u_3^2 + ab u_4^2.
$$

It follows that $SB_2(M_2(A))$ is contained in the quadric
$$t_1 t_2 = u_1^2 - a u_2^2 - b u_3^2 + ab u_4^2.$$
Since both varieties are irreducible and of the same dimension,
they must coincide.

\end{proof}

Let $C = SB(A)$.

\begin{proposition} 
\label{MSLquat}
$M(SL_1(A)) = \Tate \oplus \Tate_{a,b}(2)[3]$, 
where $\Tate_{a,b} = cone(\Tate(1)[2] \to M(C))$.
\end{proposition}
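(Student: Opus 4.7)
The plan is to use the smooth compactification $X_A$ from Proposition~\ref{Compactif}. By the preceding proposition, $X_A$ is the smooth three-dimensional quadric $\{t^2 = u_1^2 - au_2^2 - bu_3^2 + abu_4^2\} \subset \Proj^4$. Setting $t = 1$ recovers the equation $Nrd = 1$, so $SL_1(A) = X_A \cap \{t \neq 0\}$, and the complementary hyperplane section $X_A \cap \{t = 0\}$ is the projective Pfister quadric in $\Proj^3$, which by the argument from the proof of the lemma preceding Proposition~\ref{MGLquat} is isomorphic to $C \times C$. Using Proposition~\ref{SplitSB} to write $M(C \times C) = M(C) \oplus M(C)(1)[2]$, the localisation triangle reads
\[
M(SL_1(A)) \to M(X_A) \xrightarrow{g} M(C)(1)[2] \oplus M(C)(2)[4] \to M(SL_1(A))[1].
\]

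First I would compute $M(X_A)$ via the rational point $p = [1:1:0:0:0] \in X_A(F)$. The standard quadric-projection construction identifies $\mathrm{Bl}_p X_A$ with $\mathrm{Bl}_{C'}\Proj^3$, where $C' = T_p X_A \cap X_A$ (with the vertex $p$ removed) is a smooth conic; explicit computation of the tangent hyperplane $\{t = u_1\}$ shows $C' = \{au_2^2 + bu_3^2 - abu_4^2 = 0\} \cong C$. The blow-up formula applied to both descriptions, combined with Voevodsky's cancellation theorem, then yields
\[
M(X_A) = \Tate \oplus M(C)(1)[2] \oplus \Tate(3)[6].
\]

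Next I would analyse $g$ block by block. By dimension, $\mathrm{Hom}(\Tate, M(C)(2)[4]) = CH^2(C) = 0$ and $\mathrm{Hom}(\Tate(3)[6], M(C)(1)[2]) = 0$; the split-case computation of the Gysin map for a hyperplane section of the split three-quadric, together with the injectivity of the base-change map $\mathrm{Pic}(C) \to \mathrm{Pic}(C_{F(C)})$, shows that the component of $g$ out of the $\Tate$-summand of $M(X_A)$ vanishes as well. Thus $g$ has the form
\[
g = \begin{pmatrix} 0 & g_{11} & 0 \\ 0 & g_{12} & g_{32} \end{pmatrix},
\]
with three non-zero entries $g_{11} \in \mathrm{End}(M(C))$, $g_{12} \in \mathrm{Hom}(M(C), M(C)(1)[2]) \cong \ZZ$ and $g_{32} \in \mathrm{Hom}(\Tate(3)[6], M(C)(2)[4]) \cong \ZZ$, which in the split case become the identity of $M(C)$ and the respective generators of $\ZZ$. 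By Rost nilpotence (Proposition~\ref{RostNilp}) these identifications lift to $F$: in particular $g_{11}$ is an isomorphism, and $g_{32}$ is identified with the $(2)[3]$-shift of the defining morphism $\Tate(1)[2] \to M(C)$ of $\Tate_{a,b}$.

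Finally, the invertibility of $g_{11}$ permits a triangular change of basis on the target that annihilates $g_{12}$, reducing $g$ to the direct sum of the identity on $M(C)(1)[2]$, the map $g_{32}$, and the zero map out of $\Tate$. Taking fibres term by term, and using the vanishing $\mathrm{Hom}(\Tate_{a,b}(2)[3], \Tate[1]) = 0$ to split the resulting extension, yields $M(SL_1(A)) = \Tate \oplus \Tate_{a,b}(2)[3]$. The hardest part will be pinning down the three non-zero components of $g$ integrally over $F$ and recognising $g_{32}$ as the defining morphism of $\Tate_{a,b}$; this requires both the explicit split-case Gysin computation and Rost nilpotence.
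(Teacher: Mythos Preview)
Your proposal is correct and follows the same overall strategy as the paper: compactify $SL_1(A)$ by the smooth three-dimensional quadric $X_A$, identify the boundary with $C\times C$, compute $M(X_A)$, and analyse the Gysin map in the localization triangle.

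The one substantive difference is in how $M(X_A)$ is obtained. The paper simply observes that the form $\langle 1,-a,-b,ab,-1\rangle$ is a hyperbolic plane plus $\langle 1,-a,-b\rangle$ and invokes Rost's computation of motives of isotropic quadrics to get
\[
M(X_A)=\Tate\oplus M(C)(1)[2]\oplus\Tate(3)[6].
\]
Your projection-from-the-rational-point argument ($\mathrm{Bl}_p X_A\cong \mathrm{Bl}_{C'}\Proj^3$, with $C'\cong C$ the conic $\langle a,b,-ab\rangle\sim\langle 1,-a,-b\rangle$) is a legitimate and more self-contained route to the same decomposition; it trades a citation for an explicit birational construction and the cancellation theorem.

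For the Gysin map the paper is terser than you: it passes at once to reduced motives (using that $SL_1(A)$ and $X_A$ are both geometrically connected, so the $\Tate$ summands match) and then asserts
\[
\widetilde{M}(SL_1(A))=\mathrm{cone}\bigl(\Tate(3)[6]\to M(C)(2)[4]\bigr)[-1],
\]
identifying the arrow, exactly as in the proof of Proposition~\ref{MGLquat}, with the canonical generator of $CH^0(C)=\ZZ$ by computing in a splitting field. Your block analysis arrives at the same conclusion but is heavier than needed: in fact $\mathrm{Hom}(\Tate,M(C)(1)[2])=H^{4,2}(C)=CH^2(C)=0$ and $\mathrm{Hom}(\Tate,M(C)(2)[4])=H^{6,3}(C)=CH^3(C)=0$ purely by dimension, so the entire $\Tate$-column of $g$ vanishes without any split-case Gysin computation or appeal to injectivity of $\mathrm{Pic}(C)\to\mathrm{Pic}(C_{F(C)})$. (Your labeling of the second group as $CH^2(C)$ is off by one, though the vanishing is unaffected.) Likewise, for $g_{32}\in\mathrm{Hom}(\Tate(1)[2],M(C))=CH^0(C)=\ZZ$ the base-change map is already an isomorphism, so Rost nilpotence is not strictly needed there; it is genuinely used only for $g_{11}\in\mathrm{End}(M(C))$.
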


\begin{proof}
The equation defining $X$ in $\Proj^4$ is
$$
t^2 = u_1^2 - a u_2^2 - b u_3^2 + ab u_4^2,
$$
which is the projective closure of the affine quadric $SL_1(A) \subset \Aff^4$
$$
u_1^2 - a u_2^2 - b u_3^2 + ab u_4^2 = 1.
$$

Quadratic form $<1, -a, -b, ab, -1>$ is equivalent to
the sum of a hyperbolic plane and $<1, -a, -b>$.
It follows from the work of Rost \cite{R} that 
$$
M(X) = \Tate \oplus M(C)(1)[2] \oplus \Tate(3)[6]. 
$$

The complement to $SL_1(A)$ inside $X$ is the smooth Pfister quadric
$$
u_1^2 - a u_2^2 - b u_3^2 + ab u_4^2 = 0,
$$
isomorphic to $C \times C$.

We have the localization distinguished triangle

$$
\widetilde{M}(SL_1(A)) \to M(C)(1)[2] \oplus \Tate(3)[6] \to M(C)(1)[2] \oplus M(C)(2)[4] \to M(SL_1(A))[1].
$$

One can see that 
$$\widetilde{M}(SL_1(A)) = cone(\Tate(3)[6] \to M(C)(2)[4])[-1] =$$ 
$$cone(\Tate(1)[2] \to M(C))(2)[3]) = \Tate_{a,b}(2)[3].$$

\end{proof}

\subsection{The case $n=3$}

In this section we consider the motive of $X_A$ for an algebra of degree 3.

Let $T$ be a smooth projective variety of dimension $d$ over $F$.
Consider a Tate motive
$\bigoplus_{i=0}^d \Tatepure{i}^{\oplus k_i}$,
which we require to be $d$-self-dual: $k_i = k_{d-i}$ for all $i$.
Let $\phi$ be a morphism of motives:
$$
\phi: \bigoplus_{i=0}^d \Tatepure{i}^{\oplus k_i} \to M(T).
$$

We can consider the dual morphism
$$
M(T)(-d)[-2d] = M(T)^* \to \bigoplus_{i=0}^d \Tatepure{-i}^{\oplus k_i},
$$
and its twist
$$
\phi^t: M(T) \to \bigoplus_{i=0}^d \Tatepure{d-i}^{\oplus k_i} =
\bigoplus_{i=0}^d \Tatepure{i}^{\oplus k_i}.
$$

\begin{remark}
We have canonical identifications:
$$
CH_i(T) = Hom_{\DMeff}(\Tatepure{i}, M(T))
$$
$$
CH^i(T) = Hom_{\DMeff}(M(T), \Tatepure{i}).
$$
From this point of view both $\phi$ and $\phi^t$
correspond to the same set of elements
$\{ \alpha_{ij} \in CH_i(T)=CH^{d-i}(T), j=1 \dots k_i \}_{i=0}^d$

\end{remark}

\begin{lemma}
\label{TateIso}
The composition $\phi^t \circ \phi$ is an isomorphism
if and only if the Gram matrix for the intersection product of $\{a_{ij}\}$ has
an invertible determinant.
\end{lemma}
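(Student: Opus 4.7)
The plan is to identify the composition $\phi^t \circ \phi$ with a block matrix of intersection numbers and then reduce the claim to an elementary determinant computation.

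First, I would use the identifications $CH_i(T) = Hom_{\DMeff}(\Tatepure{i}, M(T))$ and $CH^i(T) = Hom_{\DMeff}(M(T), \Tatepure{i})$ from the preceding remark to regard $\phi$ as the collection of classes $\alpha_{ij} \in CH_i(T)$. Unwinding the definition of $\phi^t$ via the Poincar\'e duality isomorphism $M(T)^* \cong M(T)\{-d\}$ shows that the component
$$
\phi^t_{i,j}: M(T) \to \Tatepure{d-i}
$$
is given by the same cycle $\alpha_{ij}$, now viewed as an element of $CH^{d-i}(T)$, consistent with the remark preceding the lemma.

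Next I would compute the matrix of
$$
\phi^t \circ \phi: \bigoplus_i \Tatepure{i}^{\oplus k_i} \to \bigoplus_{i'} \Tatepure{d-i'}^{\oplus k_{i'}}.
$$
Its $((i,j),(i',j'))$-entry lies in $Hom_{\DMeff}(\Tatepure{i}, \Tatepure{d-i'}) = H^{2(d-i-i'),\, d-i-i'}(Spec\, F)$, which equals $\Tate$ when $i+i'=d$ and vanishes otherwise, since higher Chow groups of a point vanish in positive codimension and negative Tate twists contribute nothing in $\DMeff$. When $i+i' = d$, the entry equals the composition $\alpha_{d-i,j'} \circ \alpha_{i,j}$, which by Poincar\'e duality is precisely the intersection pairing $\alpha_{i,j} \cdot \alpha_{d-i,j'} \in CH_0(Spec\, F) = \Tate$. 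After reindexing the target via the bijection $i' \leftrightarrow d-i'$ permitted by $k_{i'}=k_{d-i'}$, the morphism $\phi^t \circ \phi$ therefore acquires a block-diagonal matrix whose $i$-th block is the $k_i \times k_i$ intersection matrix $G_i = (\alpha_{i,j} \cdot \alpha_{d-i,j'})_{j,j'}$.

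Finally, an endomorphism of the split pure Tate motive $\bigoplus_i \Tatepure{i}^{\oplus k_i}$ is an isomorphism iff each block determinant $\det(G_i) \in \Tate$ is a unit. The full Gram matrix of $\{\alpha_{ij}\}$ under the intersection pairing is itself block-antidiagonal, since $\alpha_{i,j} \cdot \alpha_{i',j'}$ vanishes unless $i+i'=d$ by dimension, and its antidiagonal blocks are precisely the $G_i$; because $G_{d-i} = G_i^T$, its determinant differs from $\prod_i \det(G_i)$ only by a sign, so it is a unit iff every $\det(G_i)$ is, iff $\phi^t \circ \phi$ is an isomorphism. The one point needing genuine care is the identification of $\phi^t_{d-i,j'} \circ \phi_{i,j}$ with the intersection number, which amounts to tracing through how Poincar\'e duality and composition in $\DMeff$ encode the intersection product; everything else is bookkeeping.
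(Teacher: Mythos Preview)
Your argument is correct and follows essentially the same route as the paper's own proof: both use the vanishing of $Hom(\Tatepure{n},\Tatepure{m})$ for $n\ne m$ to reduce $\phi^t\circ\phi$ to a block structure governed by the intersection numbers $\alpha_{i,j}\cdot\alpha_{d-i,j'}$. The only cosmetic difference is that the paper separates the cases $i\ne d/2$ and $i=d/2$ and writes down the $2k\times 2k$ block-antidiagonal matrix in the former case, whereas you handle all degrees uniformly and then compare with the global Gram matrix; the content is the same.
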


\begin{proof}
First recall, that
$$Hom(\Tatepure{n}, \Tatepure{m}) =
\left\{
\begin{array}{lll}
\ZZ         , & n=m \\
0           , & otherwise
\end{array}
\right.$$

\newcommand{\tatepair}[0]{\Tatepure{i}^{\oplus k} \oplus \Tatepure{d-i}^{\oplus k}}
\newcommand{\tatehalf}[0]{\Tatepure{d/2}^{\oplus k}}

This reduces the general case to the following two special cases
of the $d$-self-dual Tate motive: $\tatepair$, $i \ne d/2$
and $\tatehalf$ (if $d$ is even).
In the first case, $\phi: \tatepair \to M(T)$ is determined by the classes
$\alpha_1, \dots, \alpha_k \in CH_i(T)$ and $\beta_1, \dots
\beta_k \in CH^i(T)$. Let $A$ be the $k \times k$ matrix with entries $\alpha_i \cdot \beta_j \in \ZZ$.
Then the matrix of $\phi^t \circ \phi$ is the $2k \times 2k$ block matrix
\[ \left( \begin{array}{cc}
0 & A \\
A^t & 0
\end{array} \right),\]
which is invertible if and only $A$ is invertible.

The case of $\tatehalf$ is similar: the morphism $\phi: \tatehalf \to M(T)$
is determined by the collection of classes $\alpha_1, \dots, \alpha_k \in CH^{d/2}(T)$,
and the matrix of $\phi^t \circ \phi$ \it is equal to the Gram matrix of $\{\alpha_i\}$.

\end{proof}

\label{SchubertCalc}

Let $A$ be an algebra of degree 3 over a field $F$ of characteristic 0.
We set $X = X_A$.
We start with the split case $A = End(V)$ when $SB_3(M_2(A))$ 
is identified with $Gr(3, V \oplus V) = Gr(3,6)$.

Recall that $Gr(3,6)$ is a cellular variety and the Chow groups
$CH^*(Gr(3,6))$ are freely generated by Schubert cells $\Delta_\lambda$
corresponding to Young diagrams $\lambda$ with 3 rows and 3 columns \cite{F}.
We will be using the so-called Pieri formula \cite{F} to compute the product of any $\Delta_\lambda$ with the generator
$\Delta_{(1)} = \tiny\yng(1) \in CH^1(Gr(3,6))$.
The result of multiplication is the sum of all $\Delta_{\lambda'}$
for partitions $\lambda'$ which can be obtained from $\lambda$ by adding one box.
For example, $$\tiny\yng(1) \cdot \tiny\yng(2,1,1) = \tiny\yng(2,2,1) + \tiny\yng(3,1,1).$$

Following Semenov's paper \cite{Sem}, in the split case $\Gm$ is acting on $X$ with finitely many fixed points,
and using the results of Bialinicki-Birula \cite{B} we conclude that in split case $X$ is also cellular, in particular the motive of $X$ is a Tate motive and all cohomology classes on $X$ are algebraic.
It follows now from the Weak Lefschetz theorem that the natural pull-back and push-forward maps
$$CH^i(Gr(3,6)) \to CH^i(X), \quad 0 \le i \le 3$$
and
$$CH^i(X) \to CH^{i+1}(Gr(3,6)), \quad 5 \le i \le 8$$
are isomorphisms.

The map in the middle codimension
$$CH^4(Gr(3,6)) \to CH^4(X)$$
is injective with cokernel of rank 1.

Under these identifications we will consider Schubert classes corresponding to partitions $\lambda$ with
$|\lambda| = i$
as elements of $CH^i(X)$ for $i \le 4$
and as elements of $CH^{i-1}(X)$ for $i \ge 4$.
For example
$$\tiny\yng(2,1,1)_X := i^*(\tiny\yng(2,1,1)_{Gr}) \in CH^4(X)$$ whereas
$$\tiny\yng(3,3)_X := i_*^{-1}(\tiny\yng(3,3)_{Gr}) \in CH^5(X).$$

For these classes the Pieri formula holds as above with the exception of codimension 4 case,
where one has to apply it twice, for example
$$
\tiny\yng(1)_X \cdot \tiny\yng(2,2)_X = i_*^{-1} i_*(\tiny\yng(1)_{X} \cdot \tiny\yng(2,2)_{X}) =
i_*^{-1} i_* i^*(\tiny\yng(1)_{Gr} \cdot \tiny\yng(2,2)_{Gr}) =
$$
$$
i_*^{-1}(\tiny\yng(1)_{Gr}^2 \cdot \tiny\yng(2,2)_{Gr}) = i_*^{-1}(\tiny\yng(3,3)_{Gr}+2\tiny\yng(3,2,1)_{Gr}+\tiny\yng(2,2,2)_{Gr}) =
\tiny\yng(3,3)_{X}+2\tiny\yng(3,2,1)_{X}+\tiny\yng(2,2,2)_{X}
$$




\medskip

Now consider the general (non-split) case.
Let $E$ be a splitting field of the algebra $A/F$. For a variety
$T/F$ we call a class $\alpha \in CH^*(T_E)$ {\it rational} if it
lies in the image of the extension of scalars map $CH^*(T) \to
CH^*(T_E)$.

\medskip

\begin{proposition}
\label{RationalClasses}
The following classes are rational:

1. $\tiny\yng(1) \in CH^1(Gr(3,6)_E), \tiny\yng(2,2) \in CH^4(Gr(3,6)_E)$

2. $\tiny\yng(3) + H \tiny\yng(2) + H^2 \tiny\yng(1) \in CH^3(\mathbb P^2_E
\times Gr(3,6)_E)$, where $H$ is the hyperplane section class on $\mathbb P^2_E$.

3. All classes divisible by 3 in $CH^*(\mathbb P^2_E),
CH^*(Gr(3,6)_E)$ or $CH^*(X_E)$
\end{proposition}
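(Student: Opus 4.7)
The plan is to prove the three claims using the descent results from Section~\ref{GenSB}, supplemented by a Schubert-calculus computation for the second half of (1) and a standard transfer argument for (3); since (1) uses (3) essentially, I would verify (3) along the way.

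For (1), the class $\tiny\yng(1)$ is the first Chern class of the Pl\"ucker line bundle, which descends to $SB_3(M_2(A))$ by Remark~\ref{Plucker}, so it is rational. For $\tiny\yng(2,2)\in CH^4(Gr(3,6)_E)$, the key input is that the rank-$9$ bundle $\xi_3^{\oplus 3}$ descends to $SB_3(M_2(A))$ by the lemma characterising $SB_n(M_l(A))$; hence $c_4(\xi_3^{\oplus 3})$ is rational. Expanding both $c_4(\xi_3^{\oplus 3})$ (via $c(\xi_3)=1-\sigma_{(1)}+\sigma_{(1,1)}-\sigma_{(1,1,1)}$ and elementary symmetric polynomials in the Chern roots) and the manifestly rational class $\sigma_{(1)}^4$ in the Schubert basis of $CH^4(Gr(3,6)_E)=\ZZ\sigma_{(3,1)}\oplus\ZZ\sigma_{(2,2)}\oplus\ZZ\sigma_{(2,1,1)}$ by Pieri's rule, the difference should be of the form $4\sigma_{(2,2)}+3m\,\sigma_{(2,1,1)}$ for some integer $m$. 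Appealing to (3) removes the second summand, so $4\sigma_{(2,2)}$ is rational; a further application of (3) subtracts $3\sigma_{(2,2)}$ to yield $\sigma_{(2,2)}$ itself.

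For (2), Proposition~\ref{SplitSB} identifies $SB(A)\times SB_3(M_2(A))$ with $\Gr_{SB(A)}(3,\mathcal{J}^{*\oplus 2})$, on which the tautological quotient $\QQ_{\mathcal{J}^{*\oplus 2}}$ is intrinsic and, in the split case, corresponds to $p_1^*\OO(1)\otimes p_2^*\QQ$ on $\mathbb{P}^2\times Gr(3,6)$. Hence $c_3(p_1^*\OO(1)\otimes p_2^*\QQ)\in CH^3(\mathbb{P}^2_E\times Gr(3,6)_E)$ is rational. Applying the standard formula for the Chern classes of a line-bundle twist of a rank-$3$ bundle and using $H^3=0$ on $\mathbb{P}^2$, the cubic collapses exactly to $c_3(\QQ)+H\,c_2(\QQ)+H^2\,c_1(\QQ)=\tiny\yng(3)+H\,\tiny\yng(2)+H^2\,\tiny\yng(1)$.

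For (3), since $A$ has prime degree $3$ it splits over some separable extension $L/F$ with $[L:F]=3$. For each of $T=SB(A)$, $SB_3(M_2(A))$, or $X$, the finite flat morphism $\pi\colon T_L\to T$ satisfies $\pi^*\pi_*(\alpha)=3\alpha$ for every Galois-invariant $\alpha\in CH^*(T_L)$; the Schubert classes on $Gr(3,6)_E$, the hyperplane class on $\mathbb{P}^2_E$, and the Bia{\l}ynicki-Birula basis of $CH^*(X_E)$ inherited from Schubert classes on $Gr(3,6)_E$ are all Galois-invariant, establishing rationality of every $3\alpha$. The main obstacle is the Schubert-calculus arithmetic in (1): everything hinges on the coefficient of $\sigma_{(2,2)}$ in $c_4(\xi_3^{\oplus 3})-\sigma_{(1)}^4$ coming out coprime to $3$, so that (3) can finish the descent. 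Parts (2) and (3) are essentially formal once the correct bundles and transfer principle are in place.
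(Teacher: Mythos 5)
Your proposal is correct, but part (1) takes a genuinely different route from the paper. For $\tiny\yng(2,2)$, the paper gives a direct geometric construction: it defines the closed subvariety $Z_1 \subset SB_3(M_2(A))$ by the rank condition $rk(\alpha_1) \le 1$, which is manifestly defined over $F$, and observes that upon splitting $Z_1$ becomes the closure of an open Schubert cell with class $\tiny\yng(2,2)$ (similarly $Z_2 = \{rk(\alpha_2) \le 2\}$ gives $\tiny\yng(1)$). This is self-contained and does not rely on part (3). Your alternative -- descending $\xi_3^{\oplus 3}$, comparing $c_4(\xi_3^{\oplus 3})$ with $\sigma_{(1)}^4$, and invoking (3) to clear the residual mod-3 terms -- does work: carrying out the Schubert calculus one finds
\[
c_4(\xi_3^{\oplus 3}) = 3\sigma_{(3,1)} + 6\sigma_{(2,2)} + 15\sigma_{(2,1,1)}, \qquad
\sigma_{(1)}^4 = 3\sigma_{(3,1)} + 2\sigma_{(2,2)} + 3\sigma_{(2,1,1)},
\]
so the difference is $4\sigma_{(2,2)} + 12\sigma_{(2,1,1)}$, and $4$ is indeed coprime to $3$ as you had hoped. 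Your approach trades the paper's short geometric argument for a somewhat longer Chern-class computation and creates a dependence of (1) on (3), but it is equally rigorous once the coefficient $4$ is verified (which you left conditional). Parts (2) and (3) follow the paper's argument essentially verbatim, though your invocation of the transfer is a little informal (one should say that restriction followed by corestriction along a degree-$3$ splitting field acts as multiplication by $3$ on the Chow groups of the split form, which are Galois-fixed since they are generated by Schubert classes).
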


\begin{proof}

1. For $i = 1, 2$ consider the subvarieties $Z_i \subset SB_3(M_2(A))$ defined by equations
$$
rk(\alpha_i) \le i.
$$
In fact, extending scalars to the splitting field $E$,
$Z_1$ and $Z_2$ are the closures of the open Schubert cells

\[ \left( \begin{array}{ccc}
1 & 0 & 0\\
* & 0 & 0\\
* & 0 & 0\\
\hline
0 & 1 & 0\\
0 & 0 & 1\\
* & * & *\end{array} \right)\]

and

\[ \left( \begin{array}{ccc}
1 & 0 & 0\\
0 & 1 & 0\\
* & * & 0\\
\hline
0 & 0 & 1\\
* & * & *\\
* & * & *\end{array} \right)\]

respectively. Therefore the class of $Z_1$ is $\tiny\yng(2,2)$,
and the class of $Z_2$ is $\tiny\yng(1)$.

2. According to Proposition \ref{SplitSB} the bundle
$p_1^*(\OO(1)) \otimes p_2^*(\QQ)$ is rational.
For any rank 3 bundle $E$ and line bundle $L$
$$c_3(L \otimes E) = c_3(E) + c_1(L) c_2(E) + c_1(L)^2 c_1(E),$$
therefore $$c_3(p_1^*(\OO(1)) \otimes p_2^*(\QQ)) = \tiny\yng(3) + H \tiny\yng(2) + H^2 \tiny\yng(1)$$
and thus this cycle is rational.

3. This follows from the transfer argument, since $A$ has a splitting field of degree 3.

\end{proof}

\begin{proposition}
\label{MotDecomp}
Consider the following five elements
$$\alpha_1 = \tiny\yng(3) + H \tiny\yng(2) + H^2 \tiny\yng(1) \in CH^3(\mathbb P^2_E \times X_E)$$
$$\alpha_2 = \tiny\yng(3,1) + H (\tiny\yng(3) + \tiny\yng(2,1)) + H^2 (\tiny\yng(2) + \tiny\yng(1,1)) \in CH^4(\mathbb P^2_E \times X_E)$$
$$\alpha_3 = (\tiny\yng(3,3) - \tiny\yng(3,2,1)) + H (-\tiny\yng(3,1) + \tiny\yng(2,2) + \tiny\yng(2,1,1)) +
H^2 (\tiny\yng(3) - \tiny\yng(2,1) + \tiny\yng(1,1,1)) \in CH^5(\mathbb P^2_E
\times X_E)$$
$$\alpha_4 = -\tiny\yng(3,2,2) + H (-\tiny\yng(3,2,1) -\tiny\yng(2,2,2)) + H^2 (-\tiny\yng(2,2)) \in CH^6(\mathbb P^2_E \times X_E)$$
$$\alpha_5 = -\tiny\yng(3,3,2) + H (-\tiny\yng(3,3,1) + \tiny\yng(3,2,2)) +
H^2 (-\tiny\yng(3,3) + \tiny\yng(3,2,1) - \tiny\yng(2,2,2)) \in CH^7(\mathbb
P^2_E \times X_E)$$

Then

1. $\alpha_1, \dots, \alpha_5$ are rational cycles, therefore each
$\alpha_i$ defines a morphism
$$\MSBD{6-i} \to M(X)$$

2. Cycles $\alpha_i, i = 1 \dots 5$ together with the canonical
maps $\Tate \to M(X)$ and $\Tatepure{8} \to M(X)$ define a
morphism
$$\phi: \mathbb Z \oplus \bigoplus_{i=1}^5 \MSBD{i} \oplus \mathbb Z\{8\} \to M(X)$$
which is an embedding of a direct summand if we consider a coefficient ring where $2$
is invertible. The complementary direct summand is a form
of $\mathbb Z\{4\}$
\end{proposition}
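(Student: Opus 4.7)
The plan is to verify the proposition in three steps: establish rationality of each $\alpha_i$ by combining the rational generators from Proposition \ref{RationalClasses}, exhibit $\phi$ as a split injection over a splitting field via a Gram matrix argument, and descend to $F$ via Rost nilpotence.

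For rationality, the class $\alpha_1$ is rational by Proposition \ref{RationalClasses}(2), and an immediate Pieri computation gives $\alpha_2 = \alpha_1 \cdot p_2^*(\tiny\yng(1))$. I would treat $\alpha_3, \alpha_4, \alpha_5$ analogously: each is expressed as an integer combination of iterated products of $\alpha_1$ with the rational Schubert classes $\tiny\yng(1)$ and $\tiny\yng(2,2)$ from Proposition \ref{RationalClasses}(1) (and with the class $H$ pulled back from $\Proj^2$), modulo corrections by classes divisible by $3$, all of which are rational by Proposition \ref{RationalClasses}(3). The specific $\pm 1$ coefficients in the stated definitions of the $\alpha_i$ are engineered precisely so these cancellations can be arranged with integer coefficients.

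For the splitting assertion, I work first over a splitting field $E$, where $X_E$ is cellular and $M(X_E)$ is split Tate. The morphism $\phi_E$ then has split Tate source, and following the pattern of Lemma \ref{TateIso}, I would produce a candidate left inverse $\phi_E^t$ from the Poincaré dual correspondences. The composition $\phi_E^t \circ \phi_E$ is governed by the Gram matrix of intersection pairings between the Schubert components of the $\alpha_i$ and their duals, computed by the Pieri rule together with the middle-codimension rule $i^* \circ i_* = \tiny\yng(1) \cdot$ recorded in Section \ref{SchubertCalc}. The key numerical assertion is that the determinant of this matrix is, up to sign, a power of $2$; once this is established, $\phi_E^t \circ \phi_E$ is an automorphism of $N_E$ after inverting $2$, so $\phi_E$ is a split injection with one-dimensional complementary summand concentrated in codimension $4$, spanned by the unique class of $CH^4(X_E)$ outside the image of $CH^4(Gr(3,6)_E) \to CH^4(X_E)$.

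To descend from $E$ to $F$, observe that $N$ lies in $\DMeffXX$ and $N_{F(SB(A))}$ is split Tate, so the argument of Proposition \ref{RostNilp} (in a minor variant) applies to $\phi^t \circ \phi \colon N[\tfrac{1}{2}] \to N[\tfrac{1}{2}]$, yielding that it is an automorphism over $F$. Then $(\phi^t \phi)^{-1} \phi^t$ is a left inverse of $\phi$, realizing $\phi$ as a split injection, and the complementary summand is a form of $\mathbb{Z}\{4\}$ because its base change to $E$ is $\mathbb{Z}\{4\}$. The main obstacle will be the Gram matrix computation: verifying that the carefully balanced $\pm 1$ coefficients in the $\alpha_i$ conspire so that the determinant has only $2$ as a prime divisor is the crux of the argument and is also the structural reason the decomposition holds only after inverting $2$.
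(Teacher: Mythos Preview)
Your approach is essentially the paper's: rationality via $\alpha_{i+1} \equiv {\tiny\yng(1)} \cdot \alpha_i \pmod 3$ (the paper uses only ${\tiny\yng(1)}$, not ${\tiny\yng(2,2)}$, here), then showing $\phi^t \circ \phi$ is an isomorphism by reducing to the split case via Proposition~\ref{RostNilp} and checking a Gram matrix there.

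One point needs correction. Your claim that the source $N$ lies in $\DMeffXX$ is false: $\Tate$ and $\Tatepure{8}$ do not lie in $\DMeffXX$ when $A$ is non-split, so Proposition~\ref{RostNilp} (or its variant) cannot be applied to all of $\phi^t\circ\phi$ at once. The paper handles this by first observing that, for dimension reasons, there are no nonzero morphisms between $\Tate \oplus \Tatepure{8}$ and $\bigoplus_{i=1}^5 M(SB(A))\{i\}$, so $\phi^t\circ\phi$ is block diagonal; the $\Tate \oplus \Tatepure{8}$ block is then an isomorphism trivially, and Proposition~\ref{RostNilp} is invoked only on the $M(SB(A))$-block, which is exactly of the required form $M(SB(A))\otimes(\text{pure Tate})$. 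The paper's Gram computation is also sharper than your sketch: in each codimension $i\ne 4$ the Schubert components appearing in the $\alpha_j$ form a $\ZZ$-basis of $CH^i(X_E)$, so those blocks contribute determinant $\pm 1$; in codimension $4$ the three classes span precisely the image of $CH^4(Gr(3,6)_E)\to CH^4(X_E)$, and the Gram matrix of ${\tiny\yng(3,1)},{\tiny\yng(2,2)},{\tiny\yng(2,1,1)}$ has determinant $-2$. This localizes exactly why $2$ must be inverted and why the complement is a form of $\Tatepure{4}$.
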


\begin{proof}
The class $\alpha_1$ is rational by Proposition \ref{RationalClasses}.
Now using Pieri formula from section \ref{SchubertCalc} one can check that for all $i$,
$$\alpha_{i+1} \equiv \tiny\yng(1) \cdot \alpha_{i} \quad mod \quad 3,$$
which again by Proposition \ref{RationalClasses} implies that all $\alpha_i$ are rational.

To prove the second claim, consider the dual map
$$\phi^t: M(X) \to \Tate \oplus \bigoplus_{i=1}^5 \MSBD{i} \oplus \Tatepure{8}.$$

It is sufficient to check that the composition $\phi^t \circ \phi$ is an isomorphism.
First note that by dimension reasons there is no morphisms between $\Tate \oplus \Tatepure{8}$ and
$\bigoplus_{i=1}^5 \MSBD{i}$. Therefore what we have to check is that the restrictions of $f^t \circ f$
to $\Tate \oplus \Tatepure{8}$ and $\bigoplus_{i=1}^5 M(SB(A))(i)$ are isomorphisms.

The restriction of $\phi^t \circ \phi$ to $\Tate \oplus \Tatepure{8}$ is an isomorphism obviously (or by Lemma \ref{TateIso}).

The restriction of $\phi^t \circ \phi$ to $\bigoplus_{i=1}^5 \MSBD{i}$ is an isomorphism if it is isomorphism in split case by Proposition \ref{RostNilp}.
In split case, using Lemma \ref{TateIso}, it is sufficient to check that the intersection pairing, restricted to
the subspace of $CH^*(X)$ generated by all Schubert classes contained in $\alpha_1, \dots, \alpha_5$
is nondegenerate. It is the case in codimension $\ne 4$, since the classes

$$\tiny\yng(1)$$

$$\tiny\yng(2), \tiny\yng(2) + \tiny\yng(1,1)$$

$$\tiny\yng(3), \tiny\yng(3) + \tiny\yng(2,1), \tiny\yng(3) - \tiny\yng(2,1) + \tiny\yng(1,1,1)$$

$$\tiny\yng(3,3) - \tiny\yng(3,2,1), -\tiny\yng(3,2,1) -\tiny\yng(2,2,2), -\tiny\yng(3,3) + \tiny\yng(3,2,1) - \tiny\yng(2,2,2)$$

$$-\tiny\yng(3,2,2), -\tiny\yng(3,3,1) + \tiny\yng(3,2,2)$$

$$-\tiny\yng(3,3,2)$$

form the bases of the groups $CH^i(X)$ of the respective codimensions $1, 2, 3, 5, 6, 7$.

As for codimension $4$, the elements

$$\tiny\yng(3,1), -\tiny\yng(3,1) + \tiny\yng(2,2) + \tiny\yng(2,1,1), -\tiny\yng(2,2)$$

span the subspace of $CH^4(X)$ consisting of cycles which pull-back from $Gr(3,6)$.

The Gram matrix for the elements $\tiny\yng(3,1), \tiny\yng(2,2), \tiny\yng(2,1,1)$ is

\[ \left( \begin{array}{ccc}
1 & 1 & 0 \\
1 & 0 & 1 \\
0 & 1 & 1 \end{array} \right)\]

This matrix has determinant $-2$, which implies that the conditions of Lemma \ref{TateIso} hold if $2$ is invertible
in the coefficient ring.

\end{proof}

\begin{remark}
The complementary summand in the decomposition from Proposition \ref{MotDecomp} corresponds to the vanishing cycle for the embedding of $X$ into the form of $SB_2(M_3(A))$. Indeed, if $A$ splits, the middle degree group $CH^4(X)$ is free of rank 4, with 3 generators {\tiny\yng(3,1), \tiny\yng(2,2), \tiny\yng(2,1,1)} coming from $Gr(3,6)$. The fourth generator is the vanishing cycle $\delta$ as
in Picard-Lefschetz theory. 
This cycle is characterized by the property that it is sent to $0$ under the direct image map.
One can prove that if $A$ splits, there exists a cycle $Z$ in the class of {\tiny\yng(2,2)} on $Gr(3,6)$ 
such that the intersection of $Z$ with $X$ consists of two components $\alpha$ and $\beta$ with
multiplicities one,
and the vanishing cycle is expressed as
$$\delta = \alpha-\beta+{\tiny\yng(3,1)}-{\tiny\yng(2,1,1)}$$

When $A$ is not split, {\tiny\yng(2,2)} is still a rational class.
However, both $\alpha$ and $\beta$ as well as {\tiny\yng(3,1)} and {\tiny\yng(2,1,1)} are not rational (even as elements of Chow groups), therefore it is not clear how to describe $\delta$ in general.
\end{remark}

\end{document}